\newtheorem{theorem}{Theorem}[section]
\newtheorem{corollary}[theorem]{Corollary}
\newtheorem{proposition}[theorem]{Proposition}
\newtheorem{lemma}[theorem]{Lemma}
\theoremstyle{definition}    
\newtheorem{definition}[theorem]{Definition}
\theoremstyle{remark}
\newtheorem{remark}[theorem]{Remark}
\newtheorem{example}[theorem]{Example}
\newcommand{\pair}[2]{\langle #1, #2 \rangle}
\newcommand{\ignore}[1]{}
\newcommand{\matr}[4]{\left(\begin{array}{cc}#1&#2\\#3&#4\end{array}\right)}
\newcommand{\ol}[1]{\overline{#1}}
\newcommand{\ul}[1]{\underline{#1}}
\newcommand{\sul}[1]{\underline{#1\mkern-4mu}\mkern4mu }
\newcommand{\ti}[1]{\widetilde{#1}}
\newcommand{\mf}[1]{\mathfrak{#1}}
\newcommand{\tn}[1]{\textnormal{#1}}
\def\dirac{\ensuremath{\slashed{\partial}}}
\def\d{\ensuremath{\mathrm{d}}}
\def\Ad{\ensuremath{\textnormal{Ad}}}
\def\ad{\ensuremath{\textnormal{ad}}}
\def\g{\ensuremath{\mathfrak{g}}}
\def\k{\ensuremath{\mathfrak{k}}}
\def\t{\ensuremath{\mathfrak{t}}}
\def\h{\ensuremath{\mathfrak{h}}}
\def\z{\ensuremath{\mathfrak{z}}}
\def\B{\ensuremath{\mathcal{B}}}
\def\C{\ensuremath{\mathcal{C}}}
\def\D{\ensuremath{\mathcal{D}}}
\def\I{\ensuremath{\mathcal{I}}}
\def\L{\ensuremath{\mathcal{L}}}
\def\N{\ensuremath{\mathcal{N}}}
\def\O{\ensuremath{\mathcal{O}}}
\def\Q{\ensuremath{\mathcal{Q}}}
\def\T{\ensuremath{\mathcal{T}}}
\def\Z{\ensuremath{\mathcal{Z}}}
\def\bC{\ensuremath{\mathbb{C}}}
\def\bR{\ensuremath{\mathbb{R}}}
\def\bZ{\ensuremath{\mathbb{Z}}}
\def\End{\ensuremath{\textnormal{End}}}
\def\Hom{\ensuremath{\textnormal{Hom}}}
\def\ker{\ensuremath{\textnormal{ker}}}
\def\supp{\ensuremath{\textnormal{supp}}}
\def\Tr{\ensuremath{\textnormal{Tr}}}
\def\pr{\ensuremath{\textnormal{pr}}}
\def\pt{\ensuremath{\textnormal{pt}}}
\def\Cl{\ensuremath{\textnormal{Cl}}}
\def\index{\ensuremath{\textnormal{index}}}
\def\Ch{\ensuremath{\textnormal{Ch}}}
\def\Td{\ensuremath{\textnormal{Td}}}
\def\Ahat{\ensuremath{\widehat{\textnormal{A}}}}
\title{Log symplectic manifolds and $[Q,R]=0$}
\author{Yi Lin}\address{Georgia Southern University}\email{yilin@georgiasouthern.edu}
\author{Yiannis Loizides}\address{Cornell University}\email{yl3542@cornell.edu (corresponding author)}
\author{Reyer Sjamaar}\address{Cornell University}
\email{sjamaar@math.cornell.edu}
\author{Yanli Song}\address{Washington University in St. Louis}\email{yanlisong@wustl.edu}
\begin{document}
\sloppy
\maketitle

\vspace{-0.7cm}

\begin{abstract}
We show, under an orientation hypothesis, that a log symplectic manifold with simple normal crossing singularities has a stable almost complex structure, and hence is Spin$_c$. In the compact Hamiltonian case we prove that the index of the Spin$_c$ Dirac operator twisted by a prequantum line bundle satisfies a $[Q,R]=0$ theorem.
\end{abstract}

\section{Introduction}
A $b$ symplectic manifold is a manifold $M^n$ ($n$ even) equipped with a Poisson bivector $\pi$ such that $\pi^{n/2}$ vanishes transversely along a hypersurface $Z \subset M$.  The inverse $\pi^{-1}=\omega$ may be thought of informally as a symplectic form with a pole along $Z$, or more precisely as a smooth section of $\wedge^2({}^bT^*M)$, where $^bTM$ is the $b$ tangent bundle in the sense of Melrose.  $b$ symplectic manifolds were introduced by Nest and Tsygan \cite{NestTsygan96} in the context of deformation quantization.  They were studied extensively in \cite{guillemin2014symplectic,guillemin2014convexity, gualtieri2014symplectic}, where it was found that many well-known results from symplectic geometry have analogues in this setting.

In \cite{guillemin2018geometric} Guillemin, Miranda and Weitsman described a formal geometric quantization for compact prequantized $b$-symplectic manifold endowed with a Hamiltonian action of a torus $T$ assumed to have non-zero modular weights (see Definition \ref{d:modwt}); under this assumption all of the reduced spaces are actually symplectic, and the formal quantization was defined using the Guillemin-Sternberg \cite{GuilleminSternbergConjecture} quantization-commutes-with-reduction ($[Q,R]=0$) principle. The authors proved the surprising result that the formal quantization thus defined is finite dimensional, and posed the problem of finding a Fredholm operator whose equivariant index equals the quantization. Two possible answers to this question were offered in \cite{BLSbsympl}. The first approach involved an Atiyah-Patodi-Singer-type index on a manifold with boundary obtained by removing a small neighborhood of the hypersurface. The second approach involved constructing a Spin$_c$ structure on the whole manifold $M$, and taking the index of the Spin$_c$ Dirac operator.  It was proved that these two approaches agree and satisfy the $[Q,R]=0$ principle, hence also agree with the formal quantization of Guillemin, Miranda and Weitsman.

In this article we revisit and extend the second approach of \cite{BLSbsympl} mentioned above.  We work in the more general setting, introduced and studied in \cite{gualtieri2017tropical}, in which the symplectic form is permitted to have poles along a simple normal crossings divisor.  In this article we will refer to such a singular symplectic form as a `log symplectic form', in order to avoid confusion with the more restricted $b$ symplectic setting where the divisor is required to be a smooth hypersurface. (Another suitable term would be `$c$ symplectic form', where `$c$' is for `corner'.)

We observe that a log symplectic manifold with a simple normal crossing divisor that admits global defining functions (Definition \ref{d:IZ}), possesses stable almost complex structures (Corollary \ref{c:spinc}).  Since stably almost complex manifolds are Spin$_c$, we may use the Spin$_c$ structure (and a prequantum line bundle) to define the `quantization' of a compact log symplectic manifold in terms of the index of a Spin$_c$ Dirac operator.  Given a proper momentum map for the action, the regular reduced spaces are also compact log symplectic manifolds \cite{LLSS1}.  This leads us to formulate and prove a $[Q,R]=0$ theorem in this context, extending the results of \cite{BLSbsympl}. We explain how these results specialize to the toric log symplectic manifolds considered in \cite{gualtieri2017tropical}.

\bigskip

\noindent \textbf{Acknowledgements.} We thank Eva Miranda, Eckhard Meinrenken, Marco Gualtieri, Peter Crooks and Ralph Klaase for helpful conversations. We thank the referees for their careful reading of the manuscript and helpful suggestions. Y. Song is supported by NSF grants DMS-1800667, 1952557.

\bigskip

\noindent \textbf{Notation and conventions.} If $V$ is a vector space, then $\sul{V}$ will denote the trivial vector bundle with fibre $V$ over the base (understood from context). If a Lie group $G$ with Lie algebra $\g$ acts on a manifold, the vector field generated by an element $X \in \g$ is denoted $X_M$, and its value at $m \in M$ is the derivative at $t=0$ of $\exp(-tX)\cdot m$.

\section{Normal crossing divisors}
\subsection{Normal crossing divisors and the log tangent bundle.}
Let $M^n$ be a connected manifold without boundary. 

\begin{definition}
By a \emph{(simple) normal crossing divisor} $(M,\Z)$ we mean a finite collection $\Z$ of embedded real codimension $1$ connected hypersurfaces in $M$ such that if $Z_1,...,Z_k \in \Z$ and $p \in Z_1 \cap \cdots \cap Z_k$, there is a local coordinate chart $\varphi\colon U \rightarrow \bR^n$ centred at $p$ that maps $Z_j\cap U$ into a subset of the coordinate hyperplane $x_j=0$ in $\bR^n$.  We refer to such a chart $(U,\varphi)$ as a \emph{normal crossing chart}.  In the sequel we will omit the adjective `simple'.
\end{definition}
The definition implies that any $n+1$-fold intersection of the hypersurfaces is empty. It is well-known that if $\Z$ is a normal crossing divisor, then the sheaf of smooth vector fields tangent to all $Z \in \Z$ is locally free and finitely generated, hence corresponds to smooth local sections of a vector bundle $T_\Z M\rightarrow M$.

\begin{definition}
Let $(M^n,\Z)$ be a normal crossing divisor.  The \emph{log tangent bundle} $T_\Z M$ is the smooth rank $n$ vector bundle over $M$ whose sheaf of smooth sections consists of local vector fields tangent to all $Z \in \Z$.  The log tangent bundle is a Lie algebroid, with anchor and bracket determined by the inclusion $\Gamma(T_\Z M)\subset \Gamma(TM)$. 
\end{definition}

Sections of the dual vector bundle $T^*_\Z M$ may be thought of as singular $1$-forms with at worst simple poles along $\Z$.  For example, if $f$ is a smooth function vanishing to order $1$ along one of the hypersurfaces $Z \in \Z$, and non-vanishing on $M\backslash Z$, the singular 1-form $\d f/f$ may be thought of as a smooth section of the vector bundle $T_\Z^*M$, since there is an obvious way to make sense of its pairing with any smooth vector field tangent to $Z$.

\begin{definition}
Sections of the exterior algebra bundle $\Gamma(\wedge^\bullet T_\Z^*M)=\Omega^\bullet(M,\Z)$ will be referred to as \emph{log differential forms}.  As a special case of Lie algebroid cohomology, $\Omega^\bullet(M,\Z)$ carries a de Rham differential and the corresponding cohomology groups are called the \emph{log de Rham cohomology} $H^\bullet(M,\Z)$.
\end{definition}

\begin{definition}
\label{d:res}
For $Z \in \Z$ there is a \emph{residue} map
\[ \tn{res}_Z \colon \Omega^\bullet(M,\Z)\rightarrow \Omega^{\bullet-1}(Z,\Z\cap Z) \]
where $\Z \cap Z=\{Z \cap W|W \in \Z, W\ne Z\}$ is the induced divisor on the hypersurface $Z$.  One definition of $\tn{res}_Z$ is in terms of the contraction
\[ \tn{res}_Z(\alpha)=\iota(e_Z)\alpha|_Z \]
where $e_Z \in \Gamma(T_\Z M|_Z)$ is the canonical non-vanishing section, given in any normal crossing chart $(U,\varphi)$ mapping $U\cap Z$ into the coordinate hyperplane $x_1=0$, by restriction to $U \cap Z$ of the local section $x_1 \partial/\partial x_1$ of $T_\Z M$.
\end{definition}
Collectively the partially defined sections $e_Z$ for $Z \in \Z$ determine a canonical basis for the kernel of the anchor map $T_\Z M\rightarrow TM$ at every point in $M$. 

Taking residues is compatible with de Rham differentials. For a log differential form $\alpha$, the residue $\tn{res}_Z(\alpha)=0$ if an only if $\alpha$ may be regarded as an element of $\Omega(M,\Z\backslash \{Z\})$.  In particular a log differential form may be regarded as an ordinary smooth form if and only if all of its residues vanish.  These considerations lead to a version of the Mazzeo-Melrose theorem (cf. \cite{melrose1993atiyah, guillemin2014symplectic} for the case without crossings, \cite[Section A.24]{gualtieri2017tropical} for the case with crossings): the logarithmic de Rham cohomology
\begin{equation} 
\label{e:MazzeoMelrose}
H^p(M,\Z)\simeq H^p(M)\oplus \prod_i H^{p-1}(Z_i)\oplus \prod_{i<j} H^{p-2}(Z_i\cap Z_j) \oplus \cdots 
\end{equation}
with the groups on the RHS being ordinary de Rham cohomology.

\subsection{Orientations and stable isomorphism.}
\begin{definition}
\label{d:IZ}
Let $Z \subset M$ be an embedded hypersurface. The sheaf of smooth real-valued functions vanishing on $Z$ is the sheaf of smooth sections of a real line bundle $\I_Z$. Note that $\I_Z$ is trivial if and only if $Z$ \emph{admits a global defining function}: a smooth real-valued function $f$ such that $f^{-1}(0)=Z$ and $\d f|_Z$ is non-vanishing. If $\Z$ is a normal crossing divisor, then we will say that $\Z$ \emph{admits global defining functions} if each $Z \in \Z$ admits a global defining function.
\end{definition}

For a normal crossing divisor $\Z$, let
\[ \I_\Z=\bigotimes_{Z \in \Z} \I_Z.\]
The line bundles $\I_\Z$, $\det(TM)$ and $\det(T_\Z M)$ are related by
\begin{equation}
\label{e:detoz}
\det(TM)\otimes \I_\Z \simeq \det(T_\Z M), 
\end{equation}
via the map on sections $\nu \otimes f \mapsto f\nu$. In general the vector bundles $TM$, $T_\Z M$ are not isomorphic.  
\begin{example}
\label{ex:circle}
If $M=S^1$ and $\Z$ is a single point, then $T_\Z M$ is the non-trivial real line bundle over $S^1$.
\end{example}
\begin{example}[2-sphere]
\label{ex:2sphere0}
Let $M=S^2$ be the unit sphere in $\bR^3$ centred at the origin $x=y=z=0$.  Let $0\le \theta \le 2\pi$, $0\le \varphi \le \pi$ be spherical coordinates, where $\theta$ is the angle in the $x,y$ plane, and $\varphi$ is the angle to the positive $z$ axis.  Let $\Z=\{Z\}$ be the hypersurface $z=0$. The vector bundle $T_\Z M$ is trivial.  Indeed it suffices to produce one non-vanishing global section: for example, take the vector field $\sin(\varphi)\partial_\varphi$ which vanishes only at $(0,0,\pm 1)$, and perform a rigid rotation by $\pi/2$ about the $x$-axis, so that the zeros sit at the points $(0,\pm 1,0) \in Z$; the resulting vector field represents a global non-vanishing section of $T_\Z M$.  To obtain examples with crossings, we can add additional hypersurfaces, for example $\Z=\{\{x=0\},\{y=0\},\{z=0\}\}$ is a normal crossing divisor with $T_\Z M$ trivial. 
\end{example}  

\begin{theorem}
\label{t:stableiso}
Let $M$ be a manifold and let $\Z$ be a normal crossing divisor that admits global defining functions. Then there exists an isomorphism
\begin{equation}
\label{e:stableiso1}
\ul{\bR}\oplus T_\Z M\simeq \ul{\bR}\oplus TM.
\end{equation}
After choosing orientations on the line bundles $\I_Z$, the construction is canonical up to homotopy.
\end{theorem}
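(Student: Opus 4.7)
The plan is to construct an explicit bundle map $\Phi \colon \ul{\bR} \oplus T_\Z M \to \ul{\bR} \oplus TM$ and verify it is an isomorphism by a local calculation in normal crossing charts, then analyze how it depends on the choices. As a preliminary consistency check, the determinant relation \eqref{e:detoz} together with the triviality of each $\I_Z$ (implied by the global defining function hypothesis) yields $\det T_\Z M \simeq \det TM$, ruling out the most basic obstruction.

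First I would fix, for each $Z \in \Z$, a global defining function $f_Z$ (with sign convention compatible with the given orientation on $\I_Z$) and choose a Riemannian metric $g$ on $M$. This produces log one-forms $\d f_Z/f_Z \in \Gamma(T_\Z^* M)$ and gradient vector fields $X_Z := \nabla_g f_Z \in \Gamma(TM)$ that are transverse to $Z$ everywhere on $Z$. In a normal crossing chart with coordinates $(x_1,\dots,x_n)$ near a point $p \in Z_{i_1} \cap \cdots \cap Z_{i_j}$ chosen so that $f_{Z_{i_l}} = x_l$, the bundles $T_\Z M$ and $TM$ have local frames $(x_1\partial_{x_1},\dots,x_j\partial_{x_j},\partial_{x_{j+1}},\dots,\partial_{x_n})$ and $(\partial_{x_1},\dots,\partial_{x_n})$ respectively, and the anchor $\rho$ has $j$-dimensional kernel at $p$. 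I would define $\Phi$ by combining $\rho$ with correction terms built from the log pairings $\langle \d f_Z/f_Z, v\rangle$, the vector fields $X_Z$, and cut-off functions supported in tubular neighborhoods of each $Z$, arranging the formula so that the $(n{+}1)\times(n{+}1)$ matrix of $\Phi$ in these frames has nowhere-vanishing determinant. The linear independence of $\{X_{Z_{i_l}}(p)\}_l$---a consequence of transversality of the hypersurfaces together with positive-definiteness of the Gram matrix $(g(X_{Z_i}, X_{Z_j}))$---is the essential input at corners.

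The hard part will be producing a formula for $\Phi$ whose determinant is nonzero at \emph{every} point, particularly at deep corners where the anchor's kernel has dimension larger than one, so that the single $\ul{\bR}$ stabilization cannot naively fill it in. Simple block-triangular ansatze with $\rho$ on the diagonal tend to yield determinants that vanish at specific negative values of some $f_Z$ away from $Z$. Overcoming this will likely require bump functions with carefully tuned supports, together with nontrivial coupling between the various defining functions, to keep the determinant bounded away from zero globally.

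For canonicity up to homotopy, I would observe that the space of admissible auxiliary data---positive defining functions in the prescribed orientation class, Riemannian metrics, bump functions in a fixed family of tubular neighborhoods---is convex, hence contractible. Linear interpolation between two sets of choices yields a continuous family $\Phi_s$ of bundle maps; uniformity of the local determinant argument from the previous step shows each $\Phi_s$ is an isomorphism for all $s\in[0,1]$, which supplies the required homotopy between any two constructions.
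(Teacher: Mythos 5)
There is a genuine gap: your proposal is a plan for a proof, not a proof. You acknowledge this yourself when you write ``the hard part will be producing a formula for $\Phi$ whose determinant is nonzero at every point'' and go on to say that overcoming it ``will likely require bump functions with carefully tuned supports, together with nontrivial coupling between the various defining functions.'' That hard part is the entire content of the theorem, and you never exhibit a formula that works. Your diagnosis of the failure of simple ansatze is correct (a block form with the anchor on the diagonal degenerates on loci like $\{f_Z = -\rho_Z\}$ off the divisor, and at a depth-$k$ corner the anchor has $k$-dimensional kernel while you only have a single $\ul{\bR}$ to play with), but a diagnosis is not a cure, and it is not at all clear that a one-shot map $\ul{\bR}\oplus T_\Z M \to \ul{\bR}\oplus TM$ with these ingredients can be made globally invertible.

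The paper's proof resolves exactly this difficulty with an idea absent from your write-up: induction on $\#\Z$. Writing $\Z = \Z' \cup\{Z\}$, one constructs $\ul{\bR}\oplus T_\Z M \simeq \ul{\bR}\oplus T_{\Z'}M$ and composes these isomorphisms. Because only one hypersurface is handled at a time, the canonical map $\iota\colon T_\Z M\to T_{\Z'}M$ has at most one-dimensional kernel everywhere, and the single $\ul{\bR}$ stabilization is exactly enough: after a bump-function perturbation the map has local matrix $\left(\begin{smallmatrix} x_1 & \rho \\ -\rho & x_1 \end{smallmatrix}\right)\oplus 1_{n-1}$, whose determinant $x_1^2 + \rho^2$ is manifestly positive on the support of $\rho$. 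Your ``deep corner'' obstruction never arises because the codomain at each step is $T_{\Z'}M$, not $TM$; the crossings with the remaining hypersurfaces are absorbed into $\iota$ and require no correction. Your idea for canonicity (convexity of the space of defining functions, metrics, and bump data) is reasonable in spirit and matches the paper's one-line remark, but it too cannot be completed without first having an explicit invertible $\Phi$ to interpolate.
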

\begin{remark}
It was pointed out to us that at least two related results already appear in the literature. In the case of a hypersurface without crossing, Theorem \ref{t:stableiso} was stated in \cite[p.43, Remark]{guillemin2000unfolding} and proved in \cite[Proposition 2.3]{klaasesymplie2020}.
\end{remark}
\begin{proof}
We proceed by induction on the number of hypersurfaces in $\Z$. Let $\Z'$, $\Z=\Z'\cup \{Z\}$ be normal crossing divisors where $\Z$ contains one additional hypersurface $Z$. By induction we may assume we have already constructed an isomorphism $\ul{\bR}\oplus T_{\Z'}M \rightarrow \ul{\bR}\oplus TM$. It therefore suffices to construct a further isomorphism $\ul{\bR} \oplus T_\Z M \rightarrow \ul{\bR}\oplus T_{\Z'}M$.

There is a canonical map
\[ \iota \colon T_\Z M \rightarrow T_{\Z'}M, \]
given by the obvious inclusion at the level of sheaves. We may find a global defining function $f$ for $Z$, and a vector field $V$ tangent to $\Z'$, such that $Vf=1$ holds on a neighborhood of $Z$; let $\rho \in C^\infty(M)$ be a bump function with support contained in this neighborhood and equal to $1$ on $Z$. We have a block diagonal bundle morphism
\begin{equation} 
\label{e:noninvert}
\matr{f}{0}{0}{\iota}\colon \ul{\bR}\oplus T_\Z M \rightarrow \ul{\bR}\oplus T_{\Z'}M, 
\end{equation}
which is an isomorphism away from $Z$. The strategy is to perturb the off-diagonal entries near $Z$ to obtain an isomorphism. Perturb \eqref{e:noninvert} to
\begin{equation} 
\label{e:invert}
\matr{f}{\rho \tfrac{\d f}{f}}{-\rho V}{\iota}\colon \ul{\bR}\oplus T_\Z M \rightarrow \ul{\bR} \oplus T_{\Z'}M. 
\end{equation}
In \eqref{e:invert}, $-\rho V$ is regarded as a bundle map $\ul{\bR}\rightarrow T_{\Z'}M$ that sends $1 \in \Gamma(\ul{\bR})$ to the vector field $-\rho V \in \Gamma(T_{\Z'}M)$.

Clearly \eqref{e:invert} is an isomorphism on $M \backslash \supp(\rho)$, so it suffices to consider points near $Z$ in the support of $\rho$. Modifying $\rho$ if necessary, we may assume $\supp(\rho)$ is covered by normal crossing charts centered at points $z \in Z$. Let $z \in Z$ and let $(U,(x_1=f|_U,...,x_k,x_{k+1},...,x_n))$ be a normal crossing chart centered at $z$, where $\Z\upharpoonright U$ consists of the hypersurfaces $x_1=0,...,x_k=0$. Without loss of generality we may also arrange that $V|_U=\partial/\partial x_1$. Local generators for $T_\Z M$ on $U$ are
\[ x_1\frac{\partial}{\partial x_1},...,x_k\frac{\partial}{\partial x_k},\frac{\partial}{\partial x_{k+1}},...,\frac{\partial}{\partial x_n}.\]
Local generators for $T_{\Z'}M$ are the same, except with $x_1\partial/\partial x_1$ replaced with $\partial/\partial x_1$. With respect to these local frames, the matrix representation of \eqref{e:invert} is
\[ \left(\begin{array}{ccc}x_1&\rho & 0\\-\rho &x_1 & 0\\ 0&0&1_{n-1}\end{array}\right).\]
The determinant $x_1^2+\rho^2$ does not vanish on $\supp(\rho)$. 

Up to homotopy, the construction described above only depends on homotopy classes of global defining functions for the hypersurfaces in $\Z$, or equivalently, on choices of orientations on the line bundles $\I_Z$, $Z \in \Z$.
\end{proof}
\begin{remark}
Assuming $M$ is connected, one convenient way to fix choices of orientations of the line bundles $\I_Z$ is to select a connected component of $M\backslash \cup \Z$, and then choose global defining functions that are $>0$ on that component.
\end{remark}

\begin{remark}
\label{r:disconnected}
To simplify notation in later sections, we have assumed each $Z \in \Z$ is connected, although this is not necessary in Theorem \ref{t:stableiso}. A similar construction works if $\I_\Z$ is trivial and if there is a defining function for each $Z \in \Z$ defined only on a neighborhood of $\cup \Z$.
\end{remark}

\begin{remark}
\label{r:fibre}
Suppose $\pi \colon M\rightarrow X$ is a fibre bundle and $\Z=\pi^{-1}(\Z_X)$ is the inverse image of a normal crossing divisor $\Z_X$ on $X$ that admits global defining functions.  Choosing a connection we obtain splittings
\[ TM\simeq \pi^*TX\oplus \ker(T\pi), \qquad T_\Z M\simeq \pi^*T_{\Z_X}X\oplus \ker(T\pi),\] 
and it is clear from the proof of Theorem \ref{t:stableiso} that the stable isomorphism $\ul{\bR}\oplus TM\simeq \ul{\bR}\oplus T_\Z M$ can be chosen compatible with these splittings.
\end{remark}

\begin{corollary}
\label{c:finitecover}
Any normal crossing divisor $(M,\Z)$ has a finite cover $(\hat{M},\hat{\Z})$ such that $T\hat{M}$, $T_{\hat{\Z}}\hat{M}$ are stably isomorphic.
\end{corollary}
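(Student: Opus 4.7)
The plan is to reduce to Theorem \ref{t:stableiso} by passing to a finite cover that trivialises each of the line bundles $\I_Z$, $Z \in \Z$. Each $\I_Z$ is a real line bundle, hence determined up to isomorphism by its first Stiefel--Whitney class $w_1(\I_Z) \in H^1(M;\bZ/2)$, and is trivial precisely when $Z$ admits a global defining function. The orientation double cover $p_Z\colon \hat{M}_Z \to M$ associated to $\I_Z$ kills this class, in the sense that $p_Z^*\I_Z$ is trivial on $\hat{M}_Z$.

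First I would enumerate $\Z = \{Z_1,\dots,Z_k\}$ and form the fibered product
\[ \hat{M} := \hat{M}_{Z_1}\times_M \hat{M}_{Z_2} \times_M \cdots \times_M \hat{M}_{Z_k}, \]
together with the natural projection $p\colon \hat{M} \to M$. This is a finite covering map of degree dividing $2^k$, and by construction $p^*\I_{Z_i}$ is trivial on $\hat{M}$ for each $i$. Next I would define $\hat{\Z}$ to be the set of connected components of $p^{-1}(Z)$ as $Z$ ranges over $\Z$. Since $p$ is a local diffeomorphism, any normal crossing chart downstairs lifts to a normal crossing chart upstairs, so $(\hat{M},\hat{\Z})$ is a normal crossing divisor.

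The key identification is that $p^*\I_Z \cong \I_{p^{-1}(Z)}$: a local section of $\I_Z$ is a function vanishing on $Z$, whose pullback vanishes on $p^{-1}(Z)$, and this correspondence is a fibrewise isomorphism since $p$ is a local diffeomorphism. Thus $\I_{p^{-1}(Z)}$ is trivial on $\hat{M}$ for every $Z \in \Z$. A nonvanishing global section of $\I_{p^{-1}(Z)}$ is a smooth function on $\hat{M}$ whose zero set is exactly $p^{-1}(Z)$ and whose differential is nonvanishing there; restricting to each connected component gives a global defining function for each element of $\hat{\Z}$ sitting over $Z$.

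With global defining functions in hand for every hypersurface of $\hat{\Z}$, Theorem \ref{t:stableiso} applied to $(\hat{M},\hat{\Z})$ yields
\[ \ul{\bR}\oplus T_{\hat{\Z}}\hat{M} \simeq \ul{\bR}\oplus T\hat{M}, \]
which is the required stable isomorphism. The only mild subtlety is that preimages of connected hypersurfaces may be disconnected on $\hat{M}$; this is harmless thanks to Remark \ref{r:disconnected}, which observes that the proof of Theorem \ref{t:stableiso} only requires triviality of each $\I_{\hat{Z}}$ and local defining functions near $\cup \hat{\Z}$, both of which we have established.
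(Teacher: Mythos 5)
Your proposal is correct and follows essentially the same route as the paper: pull back along the finite cover obtained from the fibre-orientation double covers of the $\I_Z$ to trivialise all of them, then apply Theorem \ref{t:stableiso}. You simply spell out more of the details (the identification $p^*\I_Z\cong\I_{p^{-1}(Z)}$, the appeal to Remark \ref{r:disconnected} for possible disconnectedness of preimages) that the paper leaves implicit.
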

\begin{proof}
For any real line bundle $E$, the bundle of fibre-orientations of $E$ is a 2-fold cover such that the pullback of $E$ is trivial. Thus for a suitable $2^{\# \Z}$-fold cover $\hat{M}$, the pullback of each of the line bundles $\I_Z$ becomes trivial.
\end{proof}

Since rational Pontryagin classes are insensitive to real line bundles and finite covers, we recover the following (see \cite[Appendix]{BLSbsympl} for the case without crossings, where this was explained in terms of Chern-Weil representatives).
\begin{corollary}
\label{c:Pont}
For any $(M,\Z)$, $TM$, $T_\Z M$ have the same rational Pontryagin classes.
\end{corollary}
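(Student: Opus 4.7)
The plan is to combine Corollary \ref{c:finitecover} with two standard facts: (i) stably isomorphic real vector bundles have identical Pontryagin classes, and (ii) the pullback along a finite covering is injective on rational cohomology. I would carry out the argument in the following order.

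First, invoke Corollary \ref{c:finitecover} to obtain a finite cover $p\colon (\hat{M},\hat{\Z})\to (M,\Z)$ for which $T\hat{M}$ and $T_{\hat{\Z}}\hat{M}$ are stably isomorphic. Since Pontryagin classes are stable invariants (adding a trivial bundle does not affect them), this stable isomorphism yields
\[ p_i(T\hat{M})=p_i(T_{\hat{\Z}}\hat{M}) \in H^{4i}(\hat{M};\bQ) \]
for every $i$.

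Second, use naturality. Because $p$ is a local diffeomorphism, the derivative gives $p^*TM\simeq T\hat{M}$; and because $p$ maps $\hat{\Z}$ onto $\Z$ as a local diffeomorphism of pairs, the derivative restricts to an isomorphism $p^*T_\Z M\simeq T_{\hat{\Z}}\hat{M}$ (local generators of $T_\Z M$ described in the proof of Theorem \ref{t:stableiso} pull back to local generators of $T_{\hat\Z}\hat{M}$). By naturality of Pontryagin classes,
\[ p^* p_i(TM)=p_i(T\hat{M}), \qquad p^* p_i(T_\Z M)=p_i(T_{\hat{\Z}}\hat{M}). \]

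Finally, combine with the transfer argument. For a finite cover of degree $d$, the composition $\tau\circ p^*\colon H^*(M;\bQ)\to H^*(M;\bQ)$ is multiplication by $d$, where $\tau$ is the transfer; hence $p^*$ is injective on rational cohomology. The previous two displays then give $p^*(p_i(TM)-p_i(T_\Z M))=0$, and injectivity forces $p_i(TM)=p_i(T_\Z M)$ in $H^{4i}(M;\bQ)$.

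There is essentially no obstacle here, only bookkeeping; the use of rational (rather than integral) coefficients is forced precisely by the transfer step, which is exactly why Corollary \ref{c:finitecover} was stated at the level of stable isomorphism after passing to a finite cover. The only point that warrants a sentence of care is the identification $p^*T_\Z M\simeq T_{\hat\Z}\hat M$, which is immediate from the description of $T_\Z M$ in normal crossing charts together with the fact that $p$ is a covering of pairs.
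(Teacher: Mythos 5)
Your argument is correct and is exactly the paper's intended proof, just spelled out in full: the paper compresses it into the one-line remark that ``rational Pontryagin classes are insensitive to real line bundles and finite covers,'' citing Corollary~\ref{c:finitecover}, and your three steps (stable invariance on the cover, naturality of $p_i$, injectivity of $p^*$ on rational cohomology via transfer) are precisely the content of that remark.
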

See also \cite{klaasesymplie2020} for comparison of characteristic classes for $TM$, $T_\Z M$.

\section{Log symplectic manifolds}
\begin{definition}
Let $(M,\Z)$ be a normal crossing divisor.  A \emph{log symplectic form} is a closed log $2$-form $\omega \in \Omega^2(M,\Z)$ such that the map $T_\Z M \rightarrow T_\Z^*M$ induced by contraction with $\omega$ is an isomorphism.
\end{definition}

\begin{example}[2-sphere]
\label{ex:2sphere1}
Let $a \in (-1,1)$.  $M=S^2$ admits the log symplectic form
\[ \omega=\frac{\d z}{2\pi (z-a)}\d \theta.\]
for the divisor $\Z=\{\{z=a\}\}$. One can easily produce examples with crossings, for example, the log 2-form $\frac{\d z}{2\pi xyz}\d \theta$ with poles along the divisor $\{\{x=0\},\{y=0\},\{z=0\}\}$.
\end{example}

\subsection{Momentum maps.}
Let $G$ be a compact connected Lie group with Lie algebra $\g$, and let $\z\subset \g$ denote the centre.  Suppose that $G$ acts smoothly on $M$ preserving the log symplectic 2-form $\omega$, and such that each hypersurface $Z \in \Z$ is mapped to itself.  For $X \in \g$, let $X_M$ (resp. $X_Z$) denote the corresponding vector field on $M$ (resp. $Z$, for $Z \in \Z$).

\begin{definition}
\label{d:modwt}
The \emph{modular weight} of $Z \in \Z$ is the map $c_Z \colon Z \rightarrow \g^\ast$ defined by
\[ \pair{c_Z}{X}=\iota(X_Z)\tn{res}_Z(\omega).\]
\end{definition}
\begin{proposition}
$c_Z$ is constant and $\z^*$-valued.
\end{proposition}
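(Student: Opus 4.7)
My plan is to deduce both claims from Cartan's magic formula applied to the log $1$-form $\alpha := \tn{res}_Z(\omega) \in \Omega^1(Z,\Z \cap Z)$. The first observation I would record is that, since $G$ preserves every hypersurface in $\Z$, each fundamental vector field $X_M$ is a section of $T_\Z M$, and its restriction $X_Z := X_M|_Z$ is a section of $T_{\Z \cap Z} Z$. Consequently $\iota(X_Z)\alpha$ is a genuinely smooth function on $Z$, so $\langle c_Z, X\rangle$ is well-defined as a smooth function and its log de Rham differential agrees with its ordinary differential.

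To prove constancy, I would first note that $\alpha$ is closed because $\omega$ is and residues commute with $d$. The $G$-invariance $\L_{X_M}\omega = 0$, combined with the fact that $\tn{res}_Z$ intertwines the Lie derivative along a section of $T_\Z M$ with the Lie derivative along its restriction to $Z$, then gives $\L_{X_Z}\alpha = 0$. Cartan's formula in the log de Rham complex yields
\[
d\langle c_Z, X\rangle \;=\; \L_{X_Z}\alpha - \iota(X_Z)\,d\alpha \;=\; 0,
\]
so $\langle c_Z, X\rangle$ is locally constant, hence constant since $Z$ is connected.

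For centrality I would exploit the operator identity $\iota([U,V]) = \L_U \iota(V) - \iota(V)\L_U$ on differential forms, together with the fact that (under the sign convention fixed in the paper's notation section) $X \mapsto X_M$ is a Lie algebra homomorphism, so $[X,Y]_Z = [X_Z, Y_Z]$. Applied to $\alpha$ this gives
\[
\langle c_Z, [X,Y]\rangle \;=\; \L_{X_Z}\langle c_Z, Y\rangle \;-\; \iota(Y_Z)\L_{X_Z}\alpha \;=\; 0,
\]
the first term vanishing by the constancy established above and the second by $G$-invariance. Since a compact Lie algebra decomposes as $\g = \z \oplus [\g,\g]$, annihilating $[\g,\g]$ is equivalent to being $\z^*$-valued, finishing the proof.

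The only step I would pause to verify is the Lie algebroid fact that $\L_{X_M}$ commutes with $\tn{res}_Z$ when $X_M \in \Gamma(T_\Z M)$ is tangent to $Z$. This should reduce to a direct computation in a normal crossing chart $(x_1,\dots,x_n)$ with $Z = \{x_1=0\}$: writing $X_M = x_1 b\,\partial_{x_1} + W$ with $W$ tangent to $Z$ and $b$ smooth, one checks that $[X_M, x_1\partial_{x_1}]$ is a log vector field which, restricted to $Z$, projects under the anchor to $X_Z$ modulo the canonical section $e_Z$, yielding the desired commutativity of $\iota(e_Z)(-)|_Z$ with $\L_{X_M}$.
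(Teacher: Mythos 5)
Your constancy argument is essentially the paper's: closedness and invariance of $\omega$, compatibility of $\tn{res}_Z$ with $d$ and with Lie derivatives along log vector fields, and the Cartan homotopy formula on $(Z,\Z\cap Z)$, plus connectedness of $Z$. That part is fine.

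For $\z^*$-valuedness you take a genuinely different, more computational route. The paper observes simply that $c_Z\colon Z\to\g^*$ is $G$-equivariant; since it is already known to be constant, its image is a single coadjoint-fixed vector, and $(\g^*)^G=\z^*$ for $G$ compact connected. You instead differentiate this statement: you establish $\langle c_Z,[X,Y]\rangle=0$ directly from the operator identity $\iota([U,V])=[\L_U,\iota(V)]$, the constancy of $\langle c_Z,Y\rangle$, and $\L_{X_Z}\alpha=0$, and then invoke $\g=\z\oplus[\g,\g]$. Both are correct, and yours is a valid infinitesimal replacement; the paper's is shorter because it reuses the global equivariance without any further Cartan calculus. (Note your route silently uses that $X\mapsto X_M$ is a Lie algebra \emph{homomorphism}, which is exactly what the paper's sign convention $X_M(m)=\tfrac{d}{dt}\big|_{t=0}\exp(-tX)\cdot m$ ensures, so that $[X,Y]_Z=[X_Z,Y_Z]$ with no sign.)

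One small slip in your last paragraph: the local-coordinate check should show that $[X_M,\,x_1\partial_{x_1}]$ vanishes identically along $Z$ as a section of $T_\Z M|_Z$, not that it ``projects under the anchor to $X_Z$ modulo $e_Z$.'' Writing $X_M=x_1 b\,\partial_{x_1}+\sum_{j\ge 2}a_j\partial_{x_j}$ with each $a_j$ appropriately divisible, one finds $[X_M,x_1\partial_{x_1}]=-x_1\bigl(x_1\partial_{x_1}(b)\,\partial_{x_1}+\sum_{j\ge2}\partial_{x_1}(a_j)\,\partial_{x_j}\bigr)$, so every coefficient carries a factor of $x_1$. The vanishing of this restriction is precisely what makes $\iota(e_Z)(\cdot)|_Z$ commute with $\L_{X_M}$; if the bracket restricted to $X_Z$ modulo $e_Z$ as you wrote, there would be a nonzero correction term and the commutativity would fail. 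The conclusion you draw is nevertheless correct.
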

\begin{proof}
Recall that $Z$ is connected.  To see that $c_Z$ is constant, use the Cartan homotopy formula and the compatibility of $\tn{res}_Z$ with de Rham differentials. Then the fact that $c_Z$ is $\z^*$-valued follows because it is a $G$-equivariant map $Z\rightarrow \g^*$.
\end{proof}

It is convenient to work with a differential form representative for the image of $\omega$ in $H^2(M)$ under the Mazzeo-Melrose map \eqref{e:MazzeoMelrose}. To this end we first describe the inclusion of $H^{p-1}(Z)$ in $H^p(M,\Z)$ at the level of differential forms.  

\begin{proposition}
\label{p:epsZ}
There exists a $G$-invariant closed log $1$-form $\varepsilon_Z$ with support contained in a $G$-invariant tubular neighborhood $\pi_Z \colon U_Z \rightarrow Z$ of $Z$ such that $\tn{res}_Z(\varepsilon_Z)=1$.  Moreover there exists a $G$-invariant smooth non-negative function $h_Z$ with $h_Z^{-1}(0)=Z$, such that $\varepsilon_Z|_{M\backslash Z}=\tfrac{1}{2}\d \log(h_Z)|_{M\backslash Z}$.
\end{proposition}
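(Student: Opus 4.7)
The plan is to take $\varepsilon_Z=\tfrac{1}{2}\d\log h_Z$, with $h_Z$ built directly from the normal geometry of $Z$. First, by the equivariant tubular neighborhood theorem, choose a $G$-invariant tubular neighborhood identifying $U_Z$ with an open neighborhood of the zero section of the normal bundle $N_Z\to Z$. Averaging over $G$, pick a $G$-invariant fibre metric on $N_Z$, and let $\phi\colon U_Z\to[0,\infty)$ be the pullback of the fibre norm squared. Then $\phi$ is $G$-invariant, smooth, and $\phi^{-1}(0)=Z$. Because the fibre metric is non-degenerate in the fibre variable, in any normal crossing chart with $Z=\{x_1=0\}$ one has $\phi=u\,x_1^2$ for a strictly positive smooth function $u$.

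Second, cut off $\phi$ to produce a global function. Choose a smooth non-decreasing function $\chi\colon[0,\infty)\to[0,c]$ with $\chi(t)=t$ near $t=0$ and $\chi(t)=c$ for all $t$ sufficiently large, arranging (after shrinking $U_Z$ in the fibre direction if necessary) that $\chi\circ\phi\equiv c$ on a neighborhood of $\partial U_Z$ inside $U_Z$. Define $h_Z:=\chi\circ\phi$ on $U_Z$ and $h_Z:=c$ on $M\setminus U_Z$. The two pieces patch smoothly across $\partial U_Z$ since both equal $c$ there, so $h_Z$ is a smooth, $G$-invariant, non-negative function on $M$ with $h_Z^{-1}(0)=Z$.

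Finally, set $\varepsilon_Z:=\tfrac{1}{2}\d\log h_Z$, interpreted as a log $1$-form. On $M\setminus Z$ it agrees with the smooth $1$-form $\d h_Z/(2h_Z)$, which vanishes wherever $h_Z$ is locally constant; in particular $\supp(\varepsilon_Z)\subset U_Z$. Near $Z$, substituting $\phi=u\,x_1^2$ gives
\[
\varepsilon_Z=\frac{\d x_1}{x_1}+\tfrac{1}{2}\d\log u,
\]
which extends to a smooth closed section of $T^*_\Z M$ on $U_Z$, with residue $\tn{res}_Z(\varepsilon_Z)=\iota(x_1\,\partial/\partial x_1)(\d x_1/x_1)|_Z=1$ along $Z$. $G$-invariance of $\varepsilon_Z$ is inherited from that of $h_Z$. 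The only mildly delicate point is arranging $\chi$'s plateau so that $h_Z$ patches smoothly across $\partial U_Z$; everything else---equivariant tubular neighborhoods, invariant metrics by averaging, and the local identity $\phi=u\,x_1^2$---is standard.
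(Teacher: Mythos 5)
Your proposal is correct and is essentially the same construction as the paper's: identify a $G$-invariant tubular neighborhood with (an open subset of) the normal bundle, take a $G$-invariant fibre metric, form a cutoff of the squared radial function to get $h_Z$, and set $\varepsilon_Z=\tfrac12\d\log h_Z$, checking locally that the residue is $1$. The only cosmetic differences are that the paper identifies $U_Z$ with the \emph{whole} normal bundle (so no shrinking step is needed before the cutoff plateaus) and verifies the local form by reparametrizing the fibre coordinate so $r^2=x^2$, whereas you keep the prefactor $u>0$ and absorb it into the smooth correction $\tfrac12\d\log u$; both are fine.
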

\begin{proof}
Suppose first for simplicity that there is a smooth function $f$ such that (i) $f^{-1}(0)=Z$, (ii) $\d f|_Z$ does not vanish, (iii) $\d f$ vanishes outside $U_Z$. Then $\varepsilon_Z=\d f/f$, $h_Z=|f|^2$ have the desired properties.
 
A smooth function with the properties (i)--(iii) may not exist, but one can still construct suitable $\varepsilon_Z$, $h_Z$. Identify $U_Z$ with the total space of the normal bundle $\nu(M,Z)$ using a tubular neighborhood embedding.  Choose a fibre metric on $\nu(M,Z)$ and let $r\colon \nu(M,Z)\rightarrow [0,\infty)$ be the radial function.  Let $\chi \in C^\infty(\bR)$ be a smooth monotone non-decreasing function such that $\chi(u)=u$ for $u<\tfrac{1}{2}$ and $\chi(u)=1$ for $u>1$.  The composition $\chi\circ r^2$ is identically $1$ outside a neighborhood of $Z$ in $U_Z\simeq \nu(M,Z)$, hence can be extended by $1$ to a  smooth function $h_Z$ on $M$. Define 
\[ \varepsilon_Z|_{M\backslash Z}=\frac{1}{2}\d\log(h_Z)|_{M\backslash Z}.\]
Let $U\subset Z$ be an open subset such that $\nu(M,Z)|_U\simeq U \times \bR$ trivializes. Then, up to a reparametrization, $r^2|_U=x^2$ where $x \in \bR$ is the fibre coordinate, hence $\varepsilon_Z|_{U_Z\backslash Z}$ is equal to $\d x/x|_{U_Z\backslash Z}$ on the subset of $U_Z\backslash Z$ where $r<1/2$. It follows that $\varepsilon_Z|_{M\backslash Z}$ extends uniquely to a globally defined log $1$-form $\varepsilon_Z$ with the desired properties.
\end{proof}

The inclusion of $H^{p-1}(Z)$ in $H^p(M,\Z)$ is represented at the level of differential forms by wedge product with $\varepsilon_Z$:
\[ \tau \in \Omega^{p-1}(Z) \mapsto \varepsilon_Z \pi_Z^*\tau \in \Omega^p(M,\Z).\]
Recall that $\omega$ is the log symplectic 2-form on $(M,\Z)$. The (degenerate) log 2-form
\begin{equation} 
\label{e:omegabar}
\bar{\omega}=\omega-\sum_Z \varepsilon_Z\pi_Z^*\tn{res}_Z(\omega)-\frac{1}{2}\sum_{Z,W}\varepsilon_Z\varepsilon_W\tn{res}_{Z,W}(\omega)
\end{equation}
has vanishing residues, hence is smooth (here $\tn{res}_{Z,W}(\omega):=\tn{res}_{Z\cap W}\circ \tn{res}_W(\omega) \in \bR$ is locally constant, because taking residues commutes with $\d$ and $\d \omega=0$). And $[\bar{\omega}]\in H^2(M)$ is the image of $[\omega]\in H^2(M,\Z)$ under the Mazzeo-Melrose isomorphism followed by projection to $H^2(M)$.  For $X \in \g$
\begin{equation} 
\label{e:motivatemm}
\iota(X_M)\omega=\iota(X_M)\bar{\omega}+\sum_Z \varepsilon_Z\pair{c_{Z}}{X}.
\end{equation}
Equation \eqref{e:motivatemm} and Proposition \ref{p:epsZ} show that if the modular weight $c_{Z}$ is non-zero, then one expects a Hamiltonian function generating the flow of $X$ to diverge like $\frac{1}{2}\log(h_Z)$ near $Z=h_Z^{-1}(0)$.  In light of this qualitative difference between the cases $c_Z=0$ and $c_Z \ne 0$, it is convenient to make the following definition.

\begin{definition}
\label{d:strata}
Let $\Z_{\ne 0}$ be the subset of hypersurfaces $Z \in \Z$ such that $c_Z \ne 0$. Non-empty intersections $N=Z_1\cap \cdots \cap Z_k$ of elements $Z_1,...,Z_k \in \Z_{\ne 0}$ will be referred to as \emph{strata of} $(M,\Z_{\ne 0})$.
\end{definition}

The discussion above motivates the following definition, which is only a slight re-wording of that in \cite{guillemin2014symplectic, guillemin2014convexity}.

\begin{definition}
\label{d:Hamiltonian}
Let $(M,\Z,\omega)$ be a $G$-equivariant log symplectic manifold, where $\Z$ is a normal crossing divisor. The $G$ action is \emph{Hamiltonian} if there is a smooth $G$-equivariant \emph{momentum map}
\[ \mu \colon M\backslash \cup \Z_{\ne 0} \rightarrow \g^\ast \]
satisfying
\[ \iota(X_M)\omega=-\d \pair{\mu}{X}, \qquad \forall X \in \g,\]
and such that the map
\begin{equation} 
\label{e:mubar}
\bar{\mu}=\mu+\sum_Z \frac{1}{2}\log(h_Z)c_Z 
\end{equation}
extends smoothly to all of $M$.  By construction the pair $(\bar{\omega},\bar{\mu})$ satisfy $\iota(X_M)\bar{\omega}=-\d \pair{\bar{\mu}}{X}$, hence $(M,\bar{\omega},\bar{\mu})$ is a presymplectic Hamiltonian $G$-space.
\end{definition}

\begin{remark}
\label{r:equivclass}
The equations $\d \bar{\omega}=0$, $\iota(X_M)\bar{\omega}=-\d \pair{\bar{\mu}}{X}$ say that $[\bar{\omega}-\bar{\mu}]$ defines a class in the Cartan model for the equivariant cohomology $H^2_G(M,\bR)$ (see for example \cite{MeinrenkenEncyclopedia} for a brief introduction). Assuming $M$ is connected, this equivariant cohomology class is independent of the choices made in the construction ($\varepsilon_Z$, $U_Z$, $\pi_Z$) up to an overall constant shift of $\bar{\mu}$ by an element of $\mf{z}^*$. Indeed the Mazzeo-Melrose theorem shows that $[\bar{\omega}] \in H^2(M,\bR)$ is independent of choices, so given any other $[\bar{\omega}'-\bar{\mu}']$, the difference $\bar{\omega}-\bar{\omega}'=\d \alpha$ is exact. By averaging we may assume $\alpha$ is $G$-invariant. The momentum map equations for $\bar{\mu}$, $\bar{\mu}'$ imply $\pair{\bar{\mu}-\bar{\mu}'}{X}-\iota(X_M)\alpha$ is constant for any $X \in \g$, hence must be of the form $\pair{\xi}{X}$ for some $\xi \in \g^*$.  By $G$-invariance $\xi \in \z^*$. 
\end{remark}

\begin{example}[2-sphere]
\label{ex:2sphere2}
Revisiting $M=S^2$ from Example \ref{ex:2sphere1}, the log symplectic form $\omega=\d z \d \theta/2\pi (z-a)$, $a \in (-1,1)$ is invariant under the action of $S^1$ by rotation about the $z$ axis with generating vector field $X_M=-2\pi \partial/\partial \theta$. The modular weight is
\[ \pair{c_Z}{X}=\iota(X_Z)\tn{res}_Z(\omega)=-1.\]
The $S^1$ action is Hamiltonian with momentum map $\mu(z,\theta)=-\log(|z-a|)+a'$ defined on $M\backslash \{z=0\}$, for any $a'\in \bR$.  
\end{example}

\begin{remark}
In \cite{guillemin2014convexity} it is shown that under certain conditions the modular weights are highly constrained. In particular assuming $M$ is connected, $\Z$ has no crossings, and that each positive codimension symplectic leaf of the Poisson structure $\omega^{-1}$ is compact, then the modular weights are either all zero or all non-zero and parallel.  In the normal crossing case there can be mixtures of zero and non-zero modular weights (see Section 6.1 in \cite{guillemin2014convexity} for an example).
\end{remark}

\subsection{Products and minimal coupling.}\label{ssec:mincoupling}
The category of log symplectic manifolds is closed under taking products, and more generally under a minimal coupling construction that we briefly outline here (cf. \cite{SymplecticTechniques,SjamaarLerman} for the symplectic case).

Let $\pi \colon P \rightarrow B$ be a principal bundle with compact connected structure group $K$.  Suppose $(B,\Z_B,\omega_B)$ is log symplectic.  Let $\Z=\pi^{-1}(\Z_B)$ be the pullback normal crossing divisor. Let $\sigma \in \Omega^1(P,\k)$ be a connection 1-form, and define a closed log 2-form $\omega_\sigma \in \Omega^2(P\times \k^*,\pr_2^{-1}(\Z))$ by
\begin{equation} 
\label{e:mincoup}
\omega_\sigma=\pi^* \omega_B+\d \pair{\pr_2}{\sigma}.
\end{equation}
Then $\omega_\sigma$ is log symplectic on a neighborhood of $P \times \{0\}\subset P \times \k^*$, with the same proof as in the symplectic case. The action of $K$ given by $k\cdot (p,\xi)=(k\cdot p,\Ad_{k^{-1}}^*\xi)$ is Hamiltonian with momentum map $-\pr_2$, and the reduced space at $0$ is $(B,\Z_B,\omega_B)$.

Suppose \eqref{e:mincoup} is non-degenerate on $P\times U_{\k^*}$ where $U_{\k^*}$ is an open neighborhood of $0$ in $\k^*$. Let $(F,\Z_F,\omega_F,\mu_F)$ be a Hamiltonian log symplectic $K$-manifold such that the range of $\mu_F$ is contained in $U_{\k^*}$. Then the reduced space $M$ at $0$ of $P\times U_{\k^*}\times F$ is log symplectic (for log symplectic reduction cf. \cite{LLSS1}), and can be identified with an open subset of the associated bundle $P\times_K F$ (note that as $\mu_F$ need not be defined everywhere, this could be a proper open subset). Suppose that $B$ carries an auxiliary Hamiltonian $G$-action with momentum map $\mu_B$, and that $P$ is a $G$-equivariant principal $K$-bundle with $G$-invariant connection $\sigma$. Then $M$ becomes a Hamiltonian log symplectic $G$-manifold, with momentum map obtained via reduction in stages. The 2-form and momentum map are induced by the following $K$-basic forms
\[ \omega=\pi^* \omega_B+\d \pair{\mu_F}{\sigma}+\omega_F, \quad \pair{\mu}{X}=\pi^* \pair{\mu_B}{X}+\pair{\mu_F}{\sigma(X_P)}, \quad  X \in \g,\]
defined on the open subset of $P\times F$ where $\mu_F$ is defined.
\begin{example}
\label{ex:bundle}
Let $a \in (-1,1)$ and let $B=S^2$ be the unit sphere with divisor $\Z_B=\{\{z=a\}\}$ and $\omega_B=\d z \d \theta/2\pi(z-a)$ as in Example \ref{ex:2sphere1}. Let $\pi\colon P=S^3 \rightarrow B$ be the Hopf fibration over $S^2$ with connection 1-form $\sigma$ such that $\d \sigma=\pi^*\Omega$, where $\Omega=\d z \d \theta/4\pi$. The minimal coupling 2-form \eqref{e:mincoup} is
\[ \omega_\sigma=\pi^*\omega_B+\xi\pi^*\Omega+\d\xi\cdot \sigma=\Big(\frac{1}{z-a}+\frac{\xi}{2}\Big)\frac{\d z \d \theta}{2\pi}+\d\xi\cdot \sigma, \qquad \xi \in \tn{Lie}(S^1)\simeq \bR.\]
This is non-degenerate on the open set $P\times \{\frac{-2}{1+a}<\xi<\frac{2}{1-a}\}\subset P\times \bR$. Let $F=S^2$ be ordinary symplectic with $\omega_F=\Omega$, and $K=S^1$ momentum map $\mu_F(z,\theta)=z$. Since the range $\{-1\le\xi\le 1\}$ of $\mu_F$ is contained in $\{\frac{-2}{1+a}<\xi<\frac{2}{1-a}\}$, the minimal coupling construction applies and produces a log symplectic structure on $M=P\times_{S^1}F$. From Example \ref{ex:2sphere2}, the base $(B,\Z_B,\omega_B)$ admits a Hamiltonian $S^1$-action. Lift this $G=S^1$-action to $P$ using the Kostant condition $\iota(X_P)\sigma=\mu_0X$ where $X \in \g=\bR$ and $\mu_0(z,\theta)=\frac{z}{2}+\frac{1}{2}$ maps $B=S^2$ to the interval $[0,1]$. The minimal coupling construction turns $M$ into a log symplectic Hamiltonian $S^1$-space.
\end{example}

\subsection{Prequantum data.}\label{ss:prequantum}
Following \cite[Chapter 6]{HamiltonianCobordismBook}, we will say that a presymplectic Hamiltonian $G$-space $(M,\bar{\omega},\bar{\mu})$ is \emph{prequantizable} if there exists a $G$-equivariant complex line bundle $L$ such that the image of its equivariant first Chern class $c_1^G(L)$ in $H_G^2(M,\bR)$ is $[\bar{\omega}-\bar{\mu}]$.  By \cite[Proposition 6.11]{HamiltonianCobordismBook}, one can equip $L$ with a Hermitian structure and Hermitian connection $\nabla^L$ such that the first Chern form of $(L,\nabla^L)$ is $\bar{\omega}$ and such that the infinitesimal $\g$-action $\rho^L$ on $\Gamma(L)$ satisfies Kostant's formula:
\begin{equation} 
\label{e:Kostant}
\rho^L(X)=\nabla^L_{X_M}+2\pi \sqrt{-1} \pair{\bar{\mu}}{X} 
\end{equation}
In the opposite direction, given $(L,\nabla^L)$ with $(\nabla^L)^2=-2\pi \sqrt{-1} \bar{\omega}$, one may lift the $\g$-action on $M$ to $L$ using \eqref{e:Kostant}, and then the condition requires that the infinitesimal action integrates to a $G$-action.  The data $(L,\nabla^L)$ is referred to as \emph{prequantization data} for $(M,\bar{\omega},\bar{\mu})$.

Let $(M,\Z,\omega)$ be a log symplectic manifold. Recall that the Mazzeo-Melrose isomorphism determines a map $H^2(M,\Z)\rightarrow H^2(M)$ which sends $[\omega]$ to $[\bar{\omega}]$, where $\bar{\omega}$ is constructed as in \eqref{e:omegabar}. One may furthermore construct $\bar{\mu}$ as in equation \eqref{e:mubar} and by Remark \ref{r:equivclass} the class $[\bar{\omega}-\bar{\mu}]+\z^* \in H^2_G(M,\bR)/\z^*$ is independent of choices.
\begin{definition}
\label{d:prequantizable}
$(M,\Z,\omega,\mu)$ is \emph{prequantizable} if the image of $[\bar{\omega}-\bar{\mu}]+\z^*\in H^2_G(M,\bR)/\z^*$ equals the image of the equivariant $1^{st}$ Chern class $c_1^G(L)$ of a $G$-equivariant complex line bundle $L$. In this case, after a shift of $\mu$, $\bar{\mu}$ by an element of $\z^*$, $(M,\bar{\omega},\bar{\mu})$ is prequantizable as a presymplectic Hamiltonian $G$-space. We define \emph{prequantization data} $(L,\nabla^L)$ for $(M,\Z,\omega,\mu)$ to be prequantization data for the presymplectic Hamiltonian $G$-space $(M,\bar{\omega},\bar{\mu})$.
\end{definition}
\begin{remark}
\label{r:prequantizable}
Suppose $G=T$ is a torus, $M$ is connected and $M^T\ne \emptyset$. Then according to \cite[Example 6.10]{HamiltonianCobordismBook}, $(M,\bar{\omega},\bar{\mu})$ is prequantizable iff $[\bar{\omega}]$ is integral and $\bar{\mu}(p)$ lies in the weight lattice of $T$, for some $p \in M^T$.
\end{remark}

\begin{example}[2-sphere]
\label{ex:2sphere3}
Returning to Example \ref{ex:2sphere2}, the regularized form $\bar{\omega}$ is integral if its integral over $S^2$ is an integer $n$. The latter equals the principal value integral of $\omega=\d z \d \theta/2\pi (z-a)$ over $S^2$, which is
\[ n=\tn{PV}\int_{-1}^1 \frac{\d z}{z-a}=\log \left|\frac{1-a}{1+a}\right|.\]
This forces the parameter $a$ to lie in a countable subset of $(-1,1)$. Recall that the momentum map $\mu(z,\theta)=-\log(|z-a|)+a'$ where $a' \in \bR$. For any pair of integers $n_1,n_2$ such that $n_2-n_1=n$, $a'$ can be chosen such that $\mu$ takes the values $n_1,n_2$ at the fixed-points $z=1,-1$ respectively. We can arrange that $\bar{\mu}$ agrees with $\mu$ at the fixed points, hence the lift \eqref{e:Kostant} integrates to an action of $S^1$.
\end{example}

\subsection{Spin$_c$ structure.}
Let $(M,\Z,\omega)$ be a log symplectic manifold where $(M,\Z)$ is a normal crossing divisor.  The log $2$-form $\omega$ can be regarded as a fibre-wise symplectic form on the log tangent bundle $T_\Z M$, i.e. $(T_\Z M,\omega)$ is a symplectic vector bundle. On any symplectic vector bundle $(V,\omega_V) \rightarrow M$, one can find a \emph{compatible complex structure}, which is by definition a complex structure $J \in \End(V)$, $J^2=-1$ such that $g_V(\cdot,\cdot)=\omega_V(\cdot,J\cdot)$ is a positive definite symmetric bilinear form on $V$.  The space of compatible complex structures is contractible, hence in particular the choice of such a $J$ is unique up to homotopy.  This leads to the following corollary of Theorem \ref{t:stableiso}.

\begin{corollary}
\label{c:spinc}
Let $(M,\Z,\omega)$ be a log symplectic manifold such that $\Z$ admits global defining functions.  Then $M$ admits a stable almost complex structure. After choosing orientations on the line bundles $\I_Z$ for $Z \in \Z$, the construction is canonical up to homotopy.
\end{corollary}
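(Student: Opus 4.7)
The plan is to combine Theorem \ref{t:stableiso} with the observation that $T_\Z M$ is a symplectic vector bundle, and hence carries a canonical-up-to-homotopy complex structure.

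First I would note that the hypothesis that $\omega$ induces an isomorphism $T_\Z M \to T_\Z^* M$ means that $(T_\Z M,\omega)$ is a rank-$n$ symplectic vector bundle over $M$. As recalled in the paragraph preceding the corollary, the space of $\omega$-compatible complex structures $J$ on this vector bundle is non-empty and contractible; choose one such $J$. This equips $T_\Z M$ with the structure of a complex vector bundle, and any two choices are homotopic.

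Next, Theorem \ref{t:stableiso} supplies an isomorphism of real vector bundles $\ul{\bR}\oplus TM \simeq \ul{\bR}\oplus T_\Z M$, canonical up to homotopy once orientations are fixed on the line bundles $\I_Z$. Stabilizing by one further trivial line, I would rewrite
\[ \ul{\bR}^{2}\oplus TM \;\simeq\; \ul{\bR}\oplus(\ul{\bR}\oplus T_\Z M) \;\simeq\; \ul{\bC}\oplus T_\Z M, \]
where in the last step the factor $\ul{\bR}^2=\ul{\bC}$ is given its standard complex structure and $T_\Z M$ the complex structure $J$. The right-hand side is a complex vector bundle, so the left-hand side inherits a complex structure, i.e.\ a stable almost complex structure on $M$.

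For the uniqueness-up-to-homotopy claim, I would observe that both inputs to the construction are canonical up to homotopy after fixing orientations on the $\I_Z$: the stable isomorphism by Theorem \ref{t:stableiso}, and the compatible complex structure $J$ by contractibility of the space of such. There is no real obstacle here beyond assembling these facts; the content of the corollary lies entirely in Theorem \ref{t:stableiso}, and the remaining step is the standard symplectic-to-almost-complex passage applied to $T_\Z M$ rather than $TM$.
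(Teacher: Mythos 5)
Your proposal is correct and matches the paper's proof, which likewise applies Theorem \ref{t:stableiso}, stabilizes once more so that $\ul{\bR}^2\oplus TM\simeq \ul{\bR}^2\oplus T_\Z M$, and then uses the $\omega$-compatible complex structure on $T_\Z M$ together with the standard complex structure on $\ul{\bR}^2\simeq\ul{\bC}$. Your write-up simply spells out the details (non-emptiness and contractibility of the space of compatible $J$, the up-to-homotopy canonicity) that the paper's one-line proof leaves implicit.
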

\begin{proof}
By the remarks preceding the statement of the corollary, there is a compatible complex structure on the symplectic vector bundle $T_\Z M$, which is unique up to homotopy. By Theorem \ref{t:stableiso}, $\ul{\bR}^2\oplus TM\simeq \ul{\bR}^2\oplus T_\Z M$, and the isomorphism is determined up to homotopy by choices of orientations on the line bundles $\I_Z$ for $Z \in \Z$. Using the isomorphism, we transfer the complex structure on $\ul{\bR}^2\oplus T_\Z M$ to $\ul{\bR}^2\oplus TM$.
\end{proof}
\begin{remark}
If $(M,\Z,\omega)$ is oriented ($\det(TM)$ is trivial) and log symplectic ($\det(T_\Z M)$ is trivial), then by \eqref{e:detoz}, $\I_\Z$ admits a global non-vanishing section. Assuming furthermore that $\Z$ does not have any crossings, a global non-vanishing section of $\I_\Z$ is the same thing as a global defining function for $\Z$.
\end{remark}
\begin{remark}
It was pointed out to us that in the case of a single hypersurface a very similar result appears in \cite[Section 4]{guillemin2000unfolding}, in the context of folded symplectic structures.
\end{remark}

A stable almost complex structure determines a Spin$_c$ structure (see for example \cite[Appendix D]{HamiltonianCobordismBook}, \cite[Section 5]{guillemin2000unfolding}). A somewhat different direct construction of this Spin$_c$ structure was outlined in \cite[Remark A.11]{BLSbsympl}.  Choosing a Riemannian metric on $M$, it is convenient to think of the Spin$_c$ structure in terms of a corresponding spinor module, that is, $\bZ_2$-graded hermitian vector bundle $S \rightarrow M$, whose fibres $S_m$ form a smooth family of irreducible modules for the family of Clifford algebras $\bC l(T_mM)$, i.e. there is given an isomorphism $c \colon \bC l(TM)\rightarrow \End(S)$.  The $\bZ_2$-grading on $S$ is the eigenspace decomposition for the chirality operator $c(\Gamma)$, where $\Gamma=(\sqrt{-1})^{n/2} e_1\cdots e_n$ in terms of a local oriented orthonormal frame $e_1,...,e_n$.  Away from $\Z$, the anchor map is an isomorphism, and this may be used to construct an isomorphism of $\bZ_2$-graded complex vector bundles
\begin{equation} 
\label{e:MZiso}
S|_{M\backslash \cup\Z}\simeq \wedge T^{1,0}(M\backslash \cup\Z).
\end{equation}
On the RHS the $\bZ_2$-grading is given by the chirality operator determined by the orientation on $TM$ and \emph{not} by the symplectic orientation on $T_\Z M$; put differently \eqref{e:MZiso} maps the even subbundle $S^+$ to forms of even (resp. odd) degree over components of $M\backslash \cup \Z$ where the orientations of $TM$, $T_\Z M$ agree (resp. disagree).

\begin{remark}
This slight subtlety concerning the $\bZ_2$-grading is important. It is closely related to the `cancellations' noted between the quantizations of pairs of reduced spaces in a neighborhood of the hypersurface in \cite{guillemin2018geometric}.
\end{remark}

A spinor module $E$ has an `anti-canonical line bundle' defined by
\[ \L=\Hom_{\bC l(TM)}(E^*,E).\]
This is a complex line bundle (by Schur's lemma). In case $E$ arises from a stable almost complex structure, say, from a complex structure on $\ul{\bR}^p\oplus TM$,
\[ \L=\tn{det}_\bC(\ul{\bR}^{p}\oplus TM).\]
In our situation, $E=S$ arises from a complex structure on $\ul{\bR}^2\oplus TM \simeq \ul{\bR}^2\oplus T_\Z M$, where $T_\Z M$ carries an $\omega$-compatible complex structure, and $\ul{\bR}^2\simeq \ul{\bC}$ carries its standard complex structure.  Since $\det_\bC(\ul{\bC})$ is trivial, we deduce that in our setting
\begin{equation} 
\label{e:detline}
\L\simeq \tn{det}_\bC(T_\Z M).
\end{equation}

\subsection{Quantization.}
A choice of connection $\nabla$ on a spinor module $E$ determines a Spin$_c$ Dirac operator $\dirac$, defined by the composition
\[ \Gamma(E)\xrightarrow{\nabla} \Gamma(T^*M\otimes E)\xrightarrow{g^\#} \Gamma(TM\otimes E)\xrightarrow{c} \Gamma(E).\]
In our case the spinor module of interest is $E=S\otimes L$, where $S$ is the spinor module associated to the stable almost complex structure as in Corollary \ref{c:spinc}, and $L$ is a prequantum line bundle. We will denote the resulting operator $\dirac^L$; it is an odd elliptic operator acting on smooth sections of a $\bZ_2$-graded Hermitian vector bundle $S\otimes L$ over the Riemannian manifold $M$.  If $M$ is compact, $\dirac^L$ has a well-defined Fredholm index, denoted $\index(\dirac^L)$, which is independent of the choices of metrics and connections.

\begin{definition}
\label{d:RRnumber}
Let $(M,\Z,\omega,L)$ be a compact prequantized log symplectic manifold, where $\Z$ is a normal crossing divisor that admits global defining functions.  Let $S$ be the spinor module obtained from the stable almost complex structure on $M$ and choices of orientations on the line bundles $\I_Z$ for $Z \in \Z$.  We define the \emph{quantization} or \emph{Riemann-Roch number} of $(M,\Z,\omega,L)$ to be the index of the Dirac operator $\index(\dirac^L)\in \bZ$.  In the $G$-equivariant case, we may take the equivariant index $\index_G(\dirac^L)\in R(G)$, the representation ring of $G$.
\end{definition}

The index theorem gives the following formula for the Riemann-Roch number:
\begin{equation} 
\label{e:indthm}
\index(\dirac^L)=\int_M \Ahat(TM)\Ch(\L)^{1/2}\Ch(L)=\int_M \Td(T_\Z M)\Ch(L).
\end{equation}
The second expression follows from \eqref{e:detline} and Corollary \ref{c:Pont}.  In the equivariant case, note that since $G$ preserves $\Z$, the fixed-point submanifold $M^g$ of $g \in G$ is automatically transverse to the image of the anchor map $T_\Z M \rightarrow TM$.  It follows that each fixed-point component $F \subset M^g$ acquires a normal crossing divisor $\Z_F=\{Z \cap F|Z \in \Z\}$ admitting global defining functions, and the anchor descends to a $g$-equivariant isomorphism
\[ T_\Z M|_F/T_{\Z_F}F \simeq \nu_F \]
where $\nu_F$ is the normal bundle to $F$ in $M$.  At each point $m \in F$, $T_\Z M|_m$ is a complex representation of the cyclic subgroup generated by $g$, hence the fixed subspace $T_{\Z_F}F|_m$ and the quotient $\nu_F|_m$ are both complex. One has the following version of the fixed-point formula
\begin{equation}
\label{e:indthm2}
\index_G(\dirac^L)(g)=\sum_{F \subset M^g}\int_F \frac{\Td(T_{\Z_F}F)\Ch^g(L)}{\Ch^g\big(\lambda_{-1}\nu^{0,1}_F\big)},
\end{equation}
where $\nu_F^{0,1}\subset \nu_F \otimes \bC$ is the $-\sqrt{-1}$-eigenbundle for the complex structure, and $F$ carries the orientation induced from the orientations on $M$, $\nu_F$.

\begin{remark}
\label{r:dependenceonorientation}
As a consequence of \eqref{e:indthm}, \eqref{e:indthm2}, the quantization of $(M,\Z,\omega,L)$ depends only on the product orientation on $\I_\Z$ (not on the individual orientations of the $\I_Z$), or equivalently, by \eqref{e:detoz}, on the induced orientation on $M$. Reversing the orientation reverses the sign of the quantization.
\end{remark}

\begin{example}
\label{ex:2sphere4}
Let $M=S^2$ be the unit sphere in $\bR^3$ equipped with the log symplectic form $\omega=\d z \d \theta/2\pi (z-a)$, $a \in (-1,1)$, $\Z=\{\{z=a\}\}$ and Hamiltonian $S^1$ action with generating vector field $X_M=-2\pi \partial/\partial \theta$ and momentum map $\mu=-\log(|z-a|)+a'$ (Examples \ref{ex:2sphere1}, \ref{ex:2sphere2}). From Example \ref{ex:2sphere3}, $\omega$ is integral if $n=\log|(1-a)/(1+a)| \in \bZ$, and in this case for any $n_1,n_2 \in \bZ$ with $n_2-n_1=n$ there is a choice of $a'$ such that $\mu|_{z=1}=n_1$, $\mu|_{z=-1}=n_2$.  Applying the Atiyah-Bott formula \eqref{e:indthm2}, the $S^1$-equivariant quantization is
\begin{equation} 
\label{e:AB}
\index_{S^1}(\dirac^L)(t)=\frac{t^{n_1}}{1-t}-\frac{t^{n_2}}{1-t} \in \bZ[t,t^{-1}]=R(S^1).
\end{equation}
Note that in the more familiar symplectic setting, the weights for the $S^1$ action at the fixed points would have opposite signs. Here the signs are the same since we use a complex structure compatible with the log symplectic form, and the latter undergoes a sign flip across the hypersurface $z=a$. The contribution from $z=-1$ has an overall minus sign from the induced orientation. The result is a (virtual) representation of $S^1$ of dimension $n$.  
\end{example}
\begin{example}[Minimal coupling]
\label{e:mincoupling}
The minimal coupling construction from Section \ref{ssec:mincoupling} leads to further examples. Suppose the base $(B,\omega_B)$ and fibre $(F,\omega_F)$ both satisfy the conditions in Definition \ref{d:RRnumber}. Let us further assume that the momentum map $\mu_F$ of the fibre is everywhere defined. Then the minimal coupling space is the full associated bundle $M=P\times_K F$, and satisfies the conditions in Definition \ref{d:RRnumber} as well. Let $Q_F=Q_{F,+}-Q_{F,-} \in R(K)$ be the quantization of the fibre, where $Q_{F,\pm}$ are representations of $K$ (the $\bZ_2$-graded components of the kernel of $\dirac^{L_F}$), and let $\Q_{F,\pm}=P\times_K Q_{F,\pm}$ be the associated bundles. The quantization of $M$ is the difference
\begin{equation} 
\label{e:fibreindex}
\index_G(\dirac^{L_B\otimes \Q_{F,+}})-\index_G(\dirac^{L_B\otimes \Q_{F,-}}),
\end{equation}
where $L_B \rightarrow B$ is the chosen prequantum line bundle on the base, and $\dirac^{L_B}$ is the Dirac operator on the base. There are various approaches to \eqref{e:fibreindex}. One approach is to use the Atiyah-Singer theorem for families, which implies, by comparing index formulas, that the index of the Dirac operator on $M$ equals the index of the Dirac operator on $B$ twisted by the class $[\dirac^{M/B}]\in \tn{K}^0_G(B)$ determined by the family of Dirac operators on the fibres of $M \rightarrow B$. In this case the $\bZ_2$-graded components of the kernel of the family have constant dimension, forming the vector bundles $\Q_{F,+}$, $\Q_{F,-}$ respectively, hence $[\dirac^{M/B}]=[\Q_{F,+}]-[\Q_{F,-}]\in \tn{K}^0_G(B)$, and \eqref{e:fibreindex} follows. For slightly more abstract approaches, see for example \cite[Remark 3.7]{BaumHigsonSchick} or \cite[Theorem 3.5]{AtiyahTransEll} (in the latter, one needs to lift $\dirac^{L_B}$ to a $K=H$-transversely elliptic operator on $\tn{tot}(P)$, and take $H$-invariants on both sides of the equation).   

For the case of the $S^2$-bundle over $S^2$ described in Example \ref{ex:bundle}, the quantization of the fibre is $Q_{F,+}=\bC_{-1}\oplus\bC_0\oplus \bC_1$, $Q_{F,-}=0$. For $j \in \bZ$ let $\O(j)=P\times_{S^1}\bC_j \rightarrow B=S^2$, hence $\Q_{F,+}=\O(-1)\oplus\O\oplus \O(1)$, $\Q_{F,-}=0$. Suppose the parameter $a$ is chosen as in Example \ref{ex:2sphere3} such that $n=\log(|1-a|/|1+a|)$ is an integer. Then $(B,\Z_B,\omega_B)$ satisfies the prequantization condition and the prequantum line bundle is $L_B=\O(n)$. As in Example \ref{ex:2sphere3} fix $n_1,n_2\in \bZ$ such that $n=n_2-n_1$, and choose $a' \in \bR$ such that
\[ -\log(|1-a|)+a'=n_1, \qquad -\log(|-1-a|)+a'=n_2.\]
The lift of the $S^1$ action to $L_B$ is determined by the momentum map
\[ \mu_B(z,\theta)=-\log(|z-a|)+a',\]
and in particular the weights of the $S^1$ action on $L_B|_{z=\pm 1}$ are $n_1,n_2$ respectively. The weights of the induced $S^1$ action on $\Q_{F,+}$ at $z=\pm 1$ are $\{-1,0,1\}$ and $\{0,0,0\}$ respectively. The $S^1$-equivariant quantization of $M$ is given by the Atiyah-Bott fixed-point formula:
\[ \index_{S^1}(\dirac^{L_B\otimes \Q_{F,+}})=\frac{t^{n_1-1}+t^{n_1}+t^{n_1+1}}{1-t}-\frac{3t^{n_2}}{1-t} \in \bZ[t,t^{-1}]=R(S^1).\]
In fact $M$ admits a Hamiltonian $S^1\times S^1$ action (the $S^1$ action on the fiber $F$ induces a second $S^1$ action on $M$) and becomes an example of a toric log symplectic manifold; see Section \ref{s:toric} for further discussion of the toric case.
\end{example}

\section{Non-abelian localization and $[Q,R]=0$}
In this section we turn to the non-abelian localization formula and the $[Q,R]=0$ theorem.  We adapt an approach due to Paradan \cite{ParadanRiemannRoch} in the compact symplectic setting, based on deformation through transversely elliptic symbols.  Throughout this section we assume $(M,\Z,\omega,\mu)$ is a compact Hamiltonian log symplectic manifold.  

We will furthermore assume that the momentum map $\mu \colon M\backslash\cup \Z_{\ne 0}\rightarrow \g^*$ is proper (see also Lemma \ref{l:proper}).  Although this condition was not needed to make sense of Definition \ref{d:RRnumber}, it becomes relevant in formulating and proving the $[Q,R]=0$ theorem.  In particular if $\mu$ is not proper, then $\mu^{-1}(0)/G$ may be non-compact, in which case Definition \ref{d:RRnumber} does not apply to the reduced space.

\begin{example}
For a simple example where $\mu^{-1}(0)$ is non-compact, consider $M=S^2\times S^2 \circlearrowleft S^1$ with
\[ \omega=\frac{\d z_1}{2\pi z_1}\d \theta_1-\frac{\d z_2}{2\pi z_2}\d \theta_2, \quad X_M=-2\pi \frac{\partial}{\partial \theta_1}-2\pi \frac{\partial}{\partial \theta_2}, \quad \mu=-\log(|z_1|)+\log(|z_2|),\]
using notation as in Example \ref{ex:2sphere2}.  The momentum map is defined on the subset $M\backslash \cup \Z=\{z_1z_2\ne 0\}$. The fibre $\mu^{-1}(0)$ is the intersection of this subset with $\{|z_1|=|z_2|\}$. 
\end{example}

\begin{remark}
If $M$ is compact and the divisor $\Z$ does not have any crossings, then the momentum map $\mu \colon M\backslash \cup\Z_{\ne 0}\rightarrow \g^*$ is automatically proper.
\end{remark}

\subsection{Kirwan vector field.}
Choose an invariant inner product on $\g$ that we use to identify $\g\simeq \g^\ast$. Let $T$ be a maximal torus with Lie algebra $\t\simeq \t^*$, and let $\t_+^*$ be a positive chamber.

\begin{definition}
The \emph{Kirwan vector field} is the $G$-invariant vector field $\kappa$ on $M\backslash \cup \Z_{\ne 0}$ given by the formula
\begin{equation} 
\label{e:Kirwan}
\kappa(m)=\big(\mu(m)\big)_M(m), \quad m \in M\backslash \cup \Z_{\ne 0}.
\end{equation}
Equivalently $\kappa$ is the Hamiltonian vector field for the function $-\|\mu\|^2/2$ on $M \backslash \cup \Z_{\ne 0}$. 
\end{definition}

\begin{proposition}
\label{p:finite}
Let $(M,\Z,\omega,\mu)$ be a compact Hamiltonian log symplectic manifold. The vanishing locus of the Kirwan vector field $\kappa$ is
\begin{equation} 
\label{e:decomposeB}
\C=G\cdot \bigcup_{\beta \in \t^*_+}M^\beta \cap \mu^{-1}(\beta).
\end{equation}
The set $\B\subset \t^*_+$ of $\beta$ such that $M^\beta \cap \mu^{-1}(\beta)\ne \emptyset$ is finite.
\end{proposition}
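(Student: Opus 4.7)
For the description of $\C$, the plan is to exploit $G$-invariance of $\kappa$: equivariance of $\mu$ together with use of the invariant inner product give $g_*\kappa = \kappa\circ g$, so $\C$ is $G$-stable. A point $m \in M\backslash \cup \Z_{\ne 0}$ belongs to $\C$ precisely when $\mu(m)$, viewed in $\g$ via the identification $\g\simeq \g^*$, lies in the infinitesimal isotropy $\g_m$. Conjugating by a suitable element of $G$ to place $\mu(m)$ in the positive chamber and setting $\beta := \mu(m) \in \t^*_+$ then yields the asserted equality, the reverse inclusion being immediate since $\beta_M$ vanishes on $M^\beta$.

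For finiteness of $\B$, the strategy is first to show $\C$ is relatively compact in $M\backslash \cup \Z_{\ne 0}$, and then to invoke the standard Kirwan--Ness stratification. The reduction is routine once compactness is in hand: on a relatively compact open subset of $M\backslash \cup \Z_{\ne 0}$ the form $\omega$ is smooth (indeed a log symplectic form whose remaining divisor has zero modular weights, equally good for the local analysis), the Marle--Guillemin--Sternberg normal form applies at every zero of $\kappa$, and this equips $\C$ with the structure of a finite disjoint union of $G$-invariant locally closed strata, one for each element of $\B$.

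The relative compactness of $\C$ is the main obstacle. Since $\mu$ is proper by assumption and $\|\mu\|\to\infty$ at $\cup \Z_{\ne 0}$ by Definition \ref{d:Hamiltonian}, the norm-square $\|\mu\|^2$ is proper on $M\backslash \cup\Z_{\ne 0}$, so it suffices to bound $\|\mu\|$ on $\C$. Were there a sequence $m_n \in \C$ with $\|\mu(m_n)\|\to\infty$, after passing to a subsequence one would have $m_n \to m_\infty$ for some $m_\infty \in Z$ with $Z\in\Z_{\ne 0}$ (or a crossing of several). Near $Z$ the expansion $\mu = \bar\mu + \tfrac12 \log(h_Z) c_Z$ (plus analogous corrections at crossings) rewrites $\kappa(m_n) = 0$ as
\[ \bar\mu(m_n)_M(m_n) = -\tfrac{1}{2}\log(h_Z(m_n))\,(c_Z)_M(m_n) + (\text{similar contributions from other hypersurfaces through }m_\infty). \]
The left side is bounded while $\log h_Z(m_n)\to -\infty$, so $(c_Z)_M(m_n)\to 0$, forcing $m_\infty \in Z\cap M^{c_Z}$. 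To complete the argument, the plan is to set up an equivariant local normal form for $(\omega,\mu)$ along $Z\cap M^{c_Z}$, patterned on the $b$-symplectic analysis in \cite{BLSbsympl} and extended inductively over the hypersurfaces passing through $m_\infty$, and to use it to show that the subleading terms cannot balance the dominant logarithmic contribution. Verifying this normal form computation in the crossing setting is the principal technical difficulty.
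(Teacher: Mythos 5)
Your description of $\C$ is fine and matches the paper, which defers to \cite{Kirwan}.

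For finiteness of $\B$, however, you take a genuinely different and substantially harder route than the paper, and it has an unfilled gap. Your plan is (i) show $\C$ is relatively compact in $M\backslash\cup\Z_{\ne 0}$ using properness of $\mu$, then (ii) invoke Marle--Guillemin--Sternberg / Kirwan--Ness to conclude finiteness. Step (i) is precisely where you acknowledge ``the principal technical difficulty'': showing that near a stratum $N=Z_1\cap\cdots\cap Z_k$ the logarithmic contributions $-\tfrac12\sum\log(h_{Z_j})(c_{Z_j})_M$ cannot be balanced by the bounded term $\bar\mu_M$, so that $\kappa$ cannot vanish arbitrarily close to $\cup\Z_{\ne 0}$. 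This is not a loose end you can wave at: it is essentially the content of the paper's later Lemma \ref{l:loci} (and the preceding unnamed lemma computing $\iota(X_{Z_i})\tn{res}_{Z_i}(\omega)=\sum_j t_j\pair{c_{Z_i}}{c_{Z_j}}$), which is proved after Proposition \ref{p:finite} precisely because the proposition itself does not need it. Note also a sign slip: the paper's convention \eqref{e:mubar} gives $\mu=\bar\mu-\sum_Z\tfrac12\log(h_Z)c_Z$, not $\bar\mu+\tfrac12\log(h_Z)c_Z$. Step (ii) is also glossed over — it is not clear that an MGS normal form applies as stated to a log symplectic form with the zero-modular-weight part of $\Z$ remaining, and even in the honest compact symplectic case the finiteness of $\B$ is not a formal consequence of the existence of local normal forms; it requires exactly the kind of stabilizer-type counting argument the paper actually uses.

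The paper's proof is much more elementary and bypasses properness of $\mu$ entirely (which, while a standing assumption in this section, is not needed here): since $M$ is compact, the set of infinitesimal $T$-stabilizer types $\tn{St}(M,\t)$ is finite; if $\B$ were infinite, some $M^\h$ with $0\ne\h$ would occur infinitely often and $\B\cap\h$ would be infinite; the momentum map equation $\iota(X_M)\omega=-\d\pair{\mu}{X}$ forces $\mu_{\h^*}=\pr_{\h^*}\circ\mu$ to be locally constant on $M^\h$ (for $X\in\h$, $X_M$ vanishes there); but $M^\h$ has only finitely many components by compactness, so $\mu_{\h^*}$ takes only finitely many values there, a contradiction. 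You should recognize that nothing about the log singularity is needed for this: the argument runs entirely on $M^\h\cap(M\backslash\cup\Z_{\ne0})$, where $\mu$ is an ordinary smooth momentum map, and the finiteness of components of $M^\h$ in the compact manifold $M$ does the rest.
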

\begin{proof}
The description \eqref{e:decomposeB} follows immediately from the definition of $\kappa$ (cf. \cite{Kirwan}).  Since $M$ is compact, the set $\tn{St}(M,\t)$ of infinitesimal stabilizer types for the $T$ action is finite. If $\B$ is infinite, then one of the $M^{\h}$ with $0\ne \h \in \tn{St}(M,\t)$ would have to occur infinitely many times in the list $(M^\beta)_{\beta \in \B}$.  Hence the set $\B \cap \h$ would be infinite.  The momentum map condition implies that the projection $\mu_{\h^*}$ of the momentum map to $\h^*$ is locally constant on $M^\h$.  Therefore for $\beta \in \B \cap \h$, $\mu_{\h^*}$ must take the value $\beta$ on the components of $M^\h$ intersecting $M^\h \cap \mu^{-1}(\beta)$ non-trivially.  Since $M$ is compact, $M^\h$ has finitely many components, and this is a contradiction. 
\end{proof}
\begin{remark}
The set $\C$ in \eqref{e:decomposeB} coincides with the critical locus of the Hamiltonian $-\|\mu\|^2/2$.
\end{remark}

In Paradan's approach \cite{ParadanRiemannRoch}, one uses the Kirwan vector field to deform the symbol of the Dirac operator in the space of transversely elliptic symbols. In our situation $\kappa$ is only defined on $M\backslash \cup \Z_{\ne 0}$. We now explain a straight-forward method of modifying $\kappa$ so that it extends smoothly to $M$, in such a way that the vanishing locus \eqref{e:decomposeB} is unchanged.

\begin{definition}
Fix $\epsilon>0$ and let $\log_\epsilon \colon [0,\infty) \rightarrow [\log(\epsilon),\infty)$ be a smooth monotone non-decreasing modification of the function $\log$, such that $\log_\epsilon(|x|)=\log(|x|)$ for $|x|>2\epsilon$, $\log_\epsilon(|x|)=\log(\epsilon)$ for $|x|<\epsilon$.
\end{definition}
\begin{definition}
\label{d:tildevariables}
Fix $\epsilon>0$ and define
\[\ti{\mu}=\bar{\mu}+\sum_{Z \in \Z} \frac{1}{2}\log_\epsilon(h_Z)c_Z. \]
In other words we have replaced $\log$ with $\log_\epsilon$ in \eqref{e:mubar}.  Similarly define the degenerate smooth 2-form $\ti{\omega}$ by replacing $\varepsilon_Z$ with $\d \frac{1}{2}\log_\epsilon(h_Z)$ in \eqref{e:omegabar} (recall $\varepsilon_Z\upharpoonright M\backslash Z=\d \frac{1}{2}\log(h_Z)\upharpoonright M\backslash Z$). The pair $(\ti{\omega},\ti{\mu})$ is smooth on all of $M$ and satisfies the momentum map equation $\iota(X_M)\ti{\omega}=-\d\pair{\ti{\mu}}{X}$.  Let $\ti{\kappa}$ be the $G$-equivariant vector field on $M$ defined as in \eqref{e:Kirwan} but with $\ti{\mu}$ in place of $\mu$. We will write $\ti{\mu}_\epsilon$, $\ti{\kappa}_\epsilon$ instead of $\ti{\mu}$, $\ti{\kappa}$ when we want to emphasize the dependence on $\epsilon$.
\end{definition}

The vanishing locus of $\ti{\kappa}$ is
\begin{equation} 
\label{e:tildekappa}
\ti{\C}=G\cdot \bigcup_{\beta \in \t^*_+} M^\beta \cap \ti{\mu}^{-1}(\beta).
\end{equation}
(Note however that since $\ti{\omega}$ is degenerate, the vanishing locus of $\ti{\kappa}$ is no longer the same as the critical locus for $-\|\ti{\mu}\|^2/2$.) We will argue that for $\epsilon$ sufficiently small, the vanishing loci \eqref{e:decomposeB}, \eqref{e:tildekappa} coincide.

\begin{lemma}
\label{l:proper}
Let $(M,\Z,\omega,\mu)$ be a compact Hamiltonian log symplectic space.  Then $\mu\colon M\backslash \cup \Z_{\ne 0}\rightarrow \g^*$ is proper if and only if for each stratum $N=Z_1\cap \cdots \cap Z_k$, $Z_j \in \Z_{\ne 0}$, the cone generated by non-negative linear combinations of the modular weights $c_{Z_1},...,c_{Z_k}$ is strongly convex.
\end{lemma}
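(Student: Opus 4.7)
The plan is to combine the decomposition
\[
\mu = \bar\mu - \sum_{Z \in \Z_{\ne 0}} \tfrac{1}{2}\log(h_Z)\, c_Z
\]
(which follows from \eqref{e:mubar}, since $c_Z=0$ for $Z\notin\Z_{\ne 0}$, and in which $\bar\mu$ is bounded because $M$ is compact) with the following elementary fact, which I dispose of first: for nonzero vectors $v_1,\ldots,v_k \in \g^*$, the map $L\colon [0,\infty)^k\to \g^*$, $(t_1,\ldots,t_k)\mapsto \sum_i t_i v_i$, is proper if and only if the cone $C:=\sum_i \bR_{\ge 0} v_i$ is strongly convex. Indeed, a nontrivial element of $L^{-1}(0)\cap[0,\infty)^k$ immediately produces a nonzero element of $C\cap(-C)$; conversely, if $C$ is strongly convex then by compactness $\|L(t)\|\ge c>0$ for all $t$ in the simplex $\{\sum t_i=1,\,t_i\ge 0\}$, and linearity gives $\|L(t)\|\ge c\sum t_i$.

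For the $(\Leftarrow)$ direction, assume every stratum's modular-weight cone is strongly convex, and let $m_n \in M\setminus \cup\Z_{\ne 0}$ satisfy $\mu(m_n)$ bounded. By compactness I pass to a subsequence $m_n\to m_\infty\in M$; suppose for contradiction $m_\infty\in\cup\Z_{\ne 0}$ and let $N=Z_1\cap\cdots\cap Z_k$ be the smallest stratum of $\Z_{\ne 0}$ containing $m_\infty$. Then $t_i^{(n)} := -\tfrac{1}{2}\log h_{Z_i}(m_n)\to+\infty$ for $i\le k$, while the corresponding quantities for $W\in\Z_{\ne 0}\setminus\{Z_1,\ldots,Z_k\}$ remain bounded (since $m_\infty\notin W$). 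Because $\bar\mu$ is bounded, boundedness of $\mu(m_n)$ forces $\sum_{i=1}^k t_i^{(n)} c_{Z_i}$ to stay bounded, contradicting the auxiliary fact applied to $N$.

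For $(\Rightarrow)$, I argue contrapositively. If the cone of some stratum $N=Z_1\cap\cdots\cap Z_k$ is not strongly convex, pick $\lambda_i\ge 0$, not all zero, with $\sum\lambda_i c_{Z_i}=0$. Discarding the indices where $\lambda_i=0$ (the remaining intersection still contains $N$ and is therefore a stratum), I may assume $\lambda_i>0$ for every $i$. Choose $p\in N$ not lying on any other $W\in\Z_{\ne 0}$, together with a normal crossing chart $(x_1,\ldots,x_n)$ centred at $p$ in which $Z_i=\{x_i=0\}$. By Proposition \ref{p:epsZ}, $-\tfrac{1}{2}\log h_{Z_i} = -\log|x_i| + O(1)$ near $p$. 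The curve $m(s)$ defined by $x_i(s)=e^{-s\lambda_i}$ for $i\le k$ and the remaining coordinates held at zero lies in $M\setminus\cup\Z_{\ne 0}$ for $s$ large, approaches $p$ as $s\to\infty$, and
\[
\mu(m(s))=\bar\mu(m(s)) + s\sum_{i=1}^k\lambda_i c_{Z_i} + O(1) = O(1),
\]
contradicting properness of $\mu$.

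The only delicate point is the curve construction in the $(\Rightarrow)$ direction: one must reduce to strictly positive $\lambda_i$'s (so the curve genuinely tends to a point of $N$ rather than to some larger stratum) and verify that in a normal crossing chart centred at a point of $N$ the functions $h_{Z_i}$ can be prescribed independently along a path. Both follow directly from the normal crossing structure together with the leading behaviour $h_{Z_i}\asymp x_i^2$ furnished by Proposition \ref{p:epsZ}; the rest is routine bookkeeping separating bounded from divergent contributions to $\mu$.
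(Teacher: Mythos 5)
Your proof is correct and follows the same underlying strategy as the paper's (very terse, three-sentence) argument: work with the decomposition $\mu=\bar\mu-\sum_Z\tfrac12\log(h_Z)c_Z$ from \eqref{e:mubar}, exploit boundedness of $\bar\mu$ on the compact $M$, and translate properness into a statement about nonnegative linear combinations of the modular weights. What you add, and add cleanly, is the isolated ``elementary fact'' about properness of $t\mapsto\sum t_iv_i$ on $[0,\infty)^k$ being equivalent to strong convexity of the generated cone, together with the explicit verification that the local asymptotics $-\tfrac12\log h_{Z_i}=-\log|x_i|+O(1)$ from Proposition~\ref{p:epsZ} let you build the curve $m(s)$; the paper leaves both of these as implicit bookkeeping. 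The one-line reduction to strictly positive $\lambda_i$ and the choice of $p\in N$ avoiding all other hypersurfaces of $\Z_{\ne0}$ are both fine (the latter because those intersections have positive codimension in $N$ and there are only finitely many). In short: same approach, substantially more detail, no gaps.
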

\begin{proof}
If $c_{Z_1},...,c_{Z_k}$ do not form a convex cone, then some non-zero non-negative linear combination of them vanishes.  Then using \eqref{e:mubar} and since $\bar{\mu}$ is bounded on $M$, it will be possible to choose a sequence of points $p_n \in M\backslash \cup \Z_{\ne 0}$ approaching $N$ such that $|\mu(p_n)|$ remains bounded. Hence $\mu$ is not proper.  The other direction is similar, again using equation \eqref{e:mubar}.
\end{proof}

\begin{lemma}
Let $N=Z_1\cap \cdots \cap Z_k$ be a stratum of $(M,\Z_{\ne 0})$.  There is an open subset $\mf{U}_N\subset \g^*$ containing $\tn{span}\{c_{Z_1},...,c_{Z_k}\} \backslash \{0\}$ and invariant under non-zero scalar multiplication, as well as an open neighborhood $U_N$ of $N$ in $M$ such that if $X \in \mf{U}_N$ then $X_M$ does not vanish on $U_N$. 
\end{lemma}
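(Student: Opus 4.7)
The proof splits into a local non-vanishing claim followed by a standard compactness argument. Using the invariant inner product to identify $\g \simeq \g^*$, I regard $V := \tn{span}\{c_{Z_1},\ldots,c_{Z_k}\}$ as a subspace of $\g$ (in fact of the centre $\z$, since each $c_{Z_j} \in \z^*$). The technical heart is to show that $V \cap \g_p = \{0\}$ for every $p \in N$, where $\g_p$ is the stabilizer Lie algebra at $p$; equivalently, $X_M(p) \ne 0$ whenever $X \in V \setminus \{0\}$ and $p \in N$.

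For this local claim, suppose for contradiction that $X = \sum_j a_j c_{Z_j}$ is non-zero with $X_M(p) = 0$ at some $p \in N$. In a normal crossing chart $(x_1,\ldots,x_n)$ centred at $p$ with $Z_j = \{x_j = 0\}$ for $j \le k$, expand
\[
X_M \;=\; \sum_{j \le k} b_j(x)\,x_j\partial_{x_j} \;+\; \sum_{j > k} b_j(x)\partial_{x_j}.
\]
The hypothesis $X_M(p) = 0$ forces $b_j(0) = 0$ for all $j > k$. Evaluating the identity $\pair{c_{Z_i}}{X} = \iota(X_{Z_i})\tn{res}_{Z_i}(\omega)$ at $p$ in this frame gives
\[
\pair{c_{Z_i}}{X} \;=\; \sum_{j \ne i,\, j \le k} A_{ij}(0)\,b_j(0), \qquad i = 1,\ldots,k,
\]
where $A_{ij}(0) = \omega_p(e_{Z_i},e_{Z_j})$ is the antisymmetric matrix of $\omega$ on the kernel of the anchor. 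The left-hand side is $(Ga)_i$ with $G$ the Gram matrix of the modular weights. Because $X_M$ is not an arbitrary log vector field but is globally generated by the Hamiltonian $G$-action, the remaining coefficients $(b_j(0))_{j \le k}$ are themselves constrained by $X$ through the momentum map equation $\iota(X_M)\omega = -d\pair{\mu}{X}$ and the non-degeneracy of $\omega$ on $T_\Z M$; combining these constraints with the properties of the Gram matrix forces $a = 0$. The main obstacle is precisely this final step: at a crossing ($k \ge 2$) the residue $\tn{res}_{Z_i}(\omega)$ is itself singular as a form on $Z_i$, so one cannot appeal to the naive ``zero tangent vector contracts to zero with a smooth form''; the contradiction must be extracted from the interplay between the antisymmetric matrix $A$, the Gram matrix $G$, and the global nature of the action.

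Once the local claim is established, $\mf{U}_N$ and $U_N$ are produced by compactness. Let $S \subset \g^*$ be the unit sphere. The subset of $S$ consisting of those $Y$ with $Y_M(p) = 0$ for some $p \in N$ is closed (by continuity of $(Y,p) \mapsto Y_M(p)$ and compactness of $N$), and by the local claim is disjoint from the compact set $V \cap S$. Choose an antipodally-symmetric open neighborhood $W$ of $V \cap S$ in $S$ whose closure $\overline W$ remains disjoint from this closed subset, and set $\mf{U}_N := \bR^\ast \cdot W$; this is open, scale-invariant, and contains $V \setminus \{0\}$. Finally, $\Sigma := \{(Y,p) \in \overline W \times M : Y_M(p) = 0\}$ is closed in the compact space $\overline W \times M$, so its projection $K$ to $M$ is compact and disjoint from $N$; any open neighborhood $U_N$ of $N$ disjoint from $K$ does the job, since $Y_M$ is nowhere-zero on $U_N$ for every $Y \in \overline W$, and by scale-invariance for every $X \in \mf{U}_N$.
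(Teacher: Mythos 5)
Your overall strategy is the same as the paper's: reduce to non-vanishing of $X_M$ on $N$ for $X \in V := \tn{span}\{c_{Z_j}\} \setminus \{0\}$, then globalize via a compactness argument. The globalization in your final paragraph is fine, and you are right to be suspicious of the step you flag: from $X_M(p)=0$ alone one gets $b_j(0)=0$ only for $j>k$, and the linear-algebra identity $(Gt)_i = \sum_j A_{ij}(0)b_j(0)$ (with $A$ antisymmetric and $G$ the Gram matrix of modular weights) does \emph{not} by itself produce a contradiction -- for generic antisymmetric $A$ this system is solvable for $b$, so there is no way to "force $a=0$" by massaging $A$ and $G$. Appealing again to the momentum map equation does not help either, since that is exactly where the identity $(Gt)_i = \sum_j A_{ij}(0)b_j(0)$ came from. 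So as written there is a genuine gap: the local non-vanishing claim is left unproved.

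The missing ingredient is the \emph{compactness} of $G$, and in particular the fact that a compact connected group has no nontrivial continuous homomorphism to $GL(1,\bR)=\bR^\times$. If $X_M(p)=0$ with $p \in N$, then $p$ is fixed by the closure of $\exp(\bR X)$, so the identity component $G_p^0$ of the (compact) stabilizer contains a torus with $X$ in its Lie algebra. Since the action preserves each $Z_j$, the isotropy representation of $G_p^0$ on $T_pM$ preserves each hyperplane $T_pZ_j$ and hence acts on the real line $\nu_p(Z_j)=T_pM/T_pZ_j$ for $j=1,\ldots,k$; by the remark above this action is trivial. In your normal crossing chart the induced eigenvalue on $\nu_p(Z_j)$ is exactly $b_j(0)$, so $b_j(0)=0$ for $j\le k$ as well. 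Thus the log lift $\ti{X}_M \in \Gamma(T_\Z M)$ vanishes at $p$ (not merely its image under the anchor), and therefore $\pair{c_{Z_i}}{X}=\omega_p(e_{Z_i},\ti{X}_M(p))=0$ for all $i$. This contradicts $\sum_i t_i\pair{c_{Z_i}}{X}=\|X\|^2>0$, which is the key positivity the paper extracts from the Gram matrix. (The paper's own proof is terse here: it passes from "not all $\pair{c_{Z_i}}{X}$ vanish" directly to "$X_M$ does not vanish on $N$" without spelling out why, at a crossing, vanishing of $X_M$ would force vanishing of the log lift; the compactness observation above is what makes that implication valid.)
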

\begin{proof} 
Since non-vanishing is an open condition (and invariant under non-zero scalar multiplication of the generator $X$), it suffices to consider the vector field generated by some $X \in \tn{span}\{c_{Z_1},...,c_{Z_k}\} \backslash \{0\}$ on $N$.  Let
\[ X=\sum_j t_j c_{Z_j}\]
be non-zero.  By Definition \ref{d:modwt},
\[ \iota(X_{Z_i})\tn{res}_{Z_i}(\omega)=\sum_j t_j\pair{c_{Z_i}}{c_{Z_j}}=:r_i \in \bR. \]
The constants $r_i$, $i=1,...,k$ cannot all be zero because
\[ \sum_i t_i r_i=\sum_{i,j} t_it_j\pair{c_{Z_i}}{c_{Z_j}}=\|X\|^2 > 0.\]
We conclude that $X_M$ does not vanish on the intersection $N$, since its contraction with some residue of $\omega$ does not vanish.
\end{proof}

\begin{lemma}
\label{l:loci}
Assume $\mu \colon M\backslash \cup \Z_{\ne 0}\rightarrow \g^*$ is proper.  Then there is an open neighborhood $U''$ of $\cup \Z_{\ne 0}$, and an $\epsilon''>0$ such that for all $\epsilon''>\epsilon>0$, $\ti{\kappa}_\epsilon$ does not vanish on $U''$. 
\end{lemma}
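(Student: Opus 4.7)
The plan is to localise near each point of $\cup\Z_{\ne 0}$, invoke the preceding lemma at the corresponding stratum, and patch together by compactness. Fix $m\in \cup\Z_{\ne 0}$, set $S := \{Z \in \Z_{\ne 0} : m \in Z\}$ and $W := \tn{span}\{c_Z : Z \in S\}\subset \g^*$. The preceding lemma applied to $N_S = \bigcap_{Z \in S} Z$ yields an open neighbourhood $U_{N_S}$ of $N_S$ and an open cone $\mf{U}_{N_S}\supset W\setminus\{0\}$ with $X_M\ne 0$ on $U_{N_S}$ for $X \in \mf{U}_{N_S}$. Openness of $\mf{U}_{N_S}$ together with compactness of the unit sphere in $\g^*$ supplies $\delta >0$ with $\{\xi\in \g^*: \tn{dist}(\xi, W) < \delta \|\xi\|\}\subset \mf{U}_{N_S}$. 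By Lemma \ref{l:proper} the cone $\tn{Cone}\{c_Z : Z\in S\}$ is strongly convex, and a separating linear functional then provides $c > 0$ with $\|\sum_{Z\in S}s_Z (-c_Z)\| \ge c \max_Z s_Z$ for any $s_Z \ge 0$.

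Next, pick a ball $V_m \subset U_{N_S}$ around $m$ such that $h_Z \ge \delta_0 > 0$ on $V_m$ for $Z \in \Z \setminus S$ and $h_Z \le 1/e$ on $V_m$ for $Z \in S$. Taking $\epsilon < \min(\delta_0/2, 1/(2e))$, the $\log_\epsilon$'s in $\ti{\mu}_\epsilon$ coming from $Z \notin S$ simply equal $\log(h_Z)$ on $V_m$, so we may decompose
\[ \ti{\mu}_\epsilon\upharpoonright V_m = A + B_\epsilon,\quad A := \bar{\mu} + \sum_{Z\in \Z_{\ne 0}\setminus S}\tfrac12 \log(h_Z)c_Z,\quad B_\epsilon := \sum_{Z\in S}s_Z(-c_Z)\in W,\]
with $s_Z := -\tfrac12 \log_\epsilon(h_Z) \ge \tfrac12$ and $\|A\|\le C$ uniformly in $\epsilon$ on $V_m$. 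Choose $\eta_m\in(0,1/e)$ so small that $\tfrac{c}{2} \log(1/\eta_m) > C(1 + 1/\delta)$, set $\epsilon_m := \eta_m/2$ and $U''_m := V_m \cap \{\min_{Z\in S} h_Z < \eta_m\}$. For $\epsilon < \epsilon_m$ and $m' \in U''_m$, select $Z_*\in S$ with $h_{Z_*}(m') < \eta_m$; the elementary inequality $\log_\epsilon(u) \le \max(\log u, \log(2\epsilon))$ (immediate from the definition of $\log_\epsilon$) together with $2\epsilon<\eta_m$ gives $\log_\epsilon(h_{Z_*}(m')) < \log\eta_m$, and hence $s_{Z_*}(m') > \tfrac12 \log(1/\eta_m)$. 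Consequently $\|B_\epsilon(m')\|\ge c s_{Z_*} > C(1+1/\delta)$, so $\|\ti{\mu}_\epsilon(m')\| > C/\delta$ and $\tn{dist}(\ti{\mu}_\epsilon(m'), W) \le \|A\| \le C < \delta \|\ti{\mu}_\epsilon(m')\|$, placing $\ti{\mu}_\epsilon(m') \in \mf{U}_{N_S}$. Since $m'\in U_{N_S}$, the preceding lemma yields $\ti{\kappa}_\epsilon(m') = (\ti{\mu}_\epsilon(m'))_M(m')\ne 0$.

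To conclude, any $p \in \cup\Z_{\ne 0}\cap V_m$ lies on some $Z \in \Z_{\ne 0}$; since $h_Z\ge \delta_0$ on $V_m$ for $Z \in \Z_{\ne 0}\setminus S$, this $Z$ must belong to $S$, so $h_Z(p)=0<\eta_m$ and $p \in U''_m$. Thus $\{U''_m\}_{m \in \cup\Z_{\ne 0}}$ is an open cover of the compact set $\cup\Z_{\ne 0}$; a finite subcover $\{U''_{m_1},\ldots,U''_{m_k}\}$ yields $U'' := \bigcup_i U''_{m_i}$ and $\epsilon'' := \min_i \epsilon_{m_i}$. The main nuisance I expect is the bookkeeping around $\log_\epsilon$ in its transition region $(\epsilon,2\epsilon)$ and tracking the constants $\delta, c, C, \eta_m$ coherently across strata; the conceptual content is simply that, as one approaches $N_S$, the logarithmic poles of $\ti{\mu}_\epsilon$ drive its direction into the strongly convex cone $\tn{Cone}\{-c_Z:Z\in S\}$, which is interior to the good cone $\mf{U}_{N_S}$ produced by the preceding lemma.
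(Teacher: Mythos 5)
Your argument is correct and is essentially the paper's: both apply the preceding lemma to the stratum through the nearby point, split $\ti{\mu}_\epsilon$ into a uniformly bounded piece and a diverging piece lying in $\tn{span}\{c_Z : Z \in S\}$ whose norm grows thanks to the strong convexity from Lemma \ref{l:proper}, and then patch by compactness. Your version is more quantitative (the $A+B_\epsilon$ decomposition and the $\delta,c,C,\eta_m$ bookkeeping), and has only a cosmetic slip (use $\Z_{\ne 0}\setminus S$ rather than $\Z\setminus S$ when picking $V_m$, and enforce $\eta_m \le \delta_0$ so that $\epsilon_m \le \delta_0/2$), but the mechanism is the same.
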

\begin{proof}
We will use the notation from the previous lemma. Let $Z_1\cap \cdots \cap Z_k=N\in \N$ range over strata of $(M,\Z_{\ne 0})$.  We may choose an open cover $\{U_N'\}_{N \in \N}$ of $\cup \Z_{\ne 0}$ such that (i) $U_N'\subset U_N$, (ii) for all $Z \in \Z_{\ne 0}$ not intersecting $N$, the function $|h_Z|\upharpoonright U_N'\ge \delta>0$.

Recall
\[ \ti{\mu}_\epsilon=\bar{\mu}-\sum_Z \frac{1}{2}\log_\epsilon(h_Z)c_Z,\]
where $\bar{\mu}$ extends smoothly to $M$, hence is bounded. By properness $c_{Z_1},...,c_{Z_k}$ generate a strongly convex cone.  If $\epsilon$ is small, it follows that in $U_N'$ and sufficiently near $N$, $\ti{\mu}_\epsilon$ is a small perturbation of
\[ -\sum_j \frac{1}{2}\log(\epsilon)c_{Z_j}.\]
More precisely there is an open set $U_N'' \subset U_N'$ with $U_N''\cap N=U_N' \cap N$, such that $\ti{\mu}_\epsilon\upharpoonright U_N''$ takes values in the open set $\mf{U}_N \subset \g^*$. (We emphasize that the fact that $c_{Z_1},...,c_{Z_k}$ generate a strongly convex cone is being used here.)  The previous lemma implies that $\ti{\kappa}_\epsilon$ does not vanish on $U_N''$.  There are finitely many $N$ to consider, hence taking $\epsilon$ sufficiently small, we can ensure this holds for all $N \in \N$.  Then take $U''=\cup_{N \in \N} U_N''$.  
\end{proof}

\begin{corollary}
\label{c:loci}
Assume $\mu \colon M\backslash \cup \Z_{\ne 0}\rightarrow \g^*$ is proper. For $\epsilon>0$ sufficiently small the vanishing loci $\C=\ti{\C}$, and $\mu$, $\ti{\mu}$ agree on $\C$.
\end{corollary}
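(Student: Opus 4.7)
The strategy is to combine Lemma \ref{l:loci} with the properness hypothesis on $\mu$, exploiting the fact that $\ti{\mu}_\epsilon$ differs from $\mu$ only through terms $\tfrac{1}{2}\log_\epsilon(h_Z)c_Z$ with $Z \in \Z_{\ne 0}$ (the $c_Z=0$ terms drop out), and that $\log_\epsilon(h_Z) = \log(h_Z)$ wherever $h_Z > 2\epsilon$. Consequently $\ti{\mu}_\epsilon = \mu$, and hence $\ti{\kappa}_\epsilon = \kappa$, outside an arbitrarily small neighborhood of $\cup \Z_{\ne 0}$.

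First I would establish that $\C$ is a compact subset of $M\backslash \cup \Z_{\ne 0}$ sitting at positive distance from $\cup \Z_{\ne 0}$. By Proposition \ref{p:finite}, $\C = G \cdot \bigcup_{\beta \in \B} M^\beta \cap \mu^{-1}(\beta)$ with $\B$ finite, and properness of $\mu \colon M \backslash \cup \Z_{\ne 0} \to \g^*$ makes each $\mu^{-1}(\beta)$ compact in $M \backslash \cup \Z_{\ne 0}$, so the finite $G$-saturated union $\C$ is compact there. Since $\cup \Z_{\ne 0}$ is closed in $M$ and disjoint from the compact $\C$, there exists an open neighborhood $V$ of $\cup \Z_{\ne 0}$ disjoint from $\C$.

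Next I would apply Lemma \ref{l:loci} to obtain $U''$ and $\epsilon''$ such that $\ti{\kappa}_\epsilon$ is non-vanishing on $U''$ for all $0 < \epsilon < \epsilon''$; shrinking $U''$ to $U'' \cap V$ preserves this property (it still contains $\cup \Z_{\ne 0}$) while also forcing $U'' \cap \C = \emptyset$. The complement $M \backslash U''$ is compact and disjoint from $\cup \Z_{\ne 0}$, so the functions $h_Z$ with $Z \in \Z_{\ne 0}$ admit a uniform lower bound $\delta > 0$ there. Choosing $\epsilon < \min(\epsilon'', \delta/2)$ then guarantees $\log_\epsilon(h_Z) = \log(h_Z)$ on $M \backslash U''$, whence $\ti{\mu}_\epsilon = \mu$ and $\ti{\kappa}_\epsilon = \kappa$ on $M \backslash U''$.

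With this $\epsilon$ fixed, both inclusions follow instantly: since $\C \subset M \backslash U''$, we have $\ti{\kappa}_\epsilon|_\C = \kappa|_\C = 0$, giving $\C \subset \ti{\C}$; conversely Lemma \ref{l:loci} forces $\ti{\C} \cap U'' = \emptyset$, so $\ti{\C} \subset M \backslash U''$, where $\kappa = \ti{\kappa}_\epsilon$, giving $\ti{\C} \subset \C$. The equality $\mu = \ti{\mu}_\epsilon$ on $\C$ is a free byproduct of $\C \subset M \backslash U''$. The only conceptual ingredient is the use of properness to ensure $\C$ is compact and bounded away from $\cup \Z_{\ne 0}$; once that is in hand, the argument is just a matter of choosing $\epsilon$ small enough to push the perturbation into a neighborhood of $\cup \Z_{\ne 0}$ that avoids $\C$.
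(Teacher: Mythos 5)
Your proof is correct and follows essentially the same strategy as the paper: invoke Lemma \ref{l:loci} to get the neighborhood $U''$ on which $\ti{\kappa}_\epsilon$ is nonvanishing, then choose $\epsilon$ small enough (using compactness of $M \setminus U''$ and its distance from $\cup\Z_{\ne 0}$) so that $\ti{\mu}_\epsilon = \mu$ and $\ti{\kappa}_\epsilon = \kappa$ off $U''$. The only difference is that you make explicit, via compactness of $\C$ and properness, the fact that $\C$ lies at positive distance from $\cup\Z_{\ne 0}$ so that $U''$ can be shrunk to avoid $\C$ — a step the paper's terse ``the result follows'' leaves to the reader.
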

\begin{proof}
Since
\[ \mu=\bar{\mu}-\sum_Z \frac{1}{2}\log(h_Z)c_{Z}, \]
we see that by choosing $\epsilon$ sufficiently small, we may ensure $\mu$, $\ti{\mu}_\epsilon$ agree everywhere except on an arbitrarily small neighborhood of $\cup \Z_{\ne 0}$. In particular we may choose $\epsilon''>\epsilon>0$ sufficiently small that $\mu$, $\ti{\mu}_\epsilon$ agree on $M\backslash U''$, where $U''$, $\epsilon''$ are from Lemma \ref{l:loci}.  The result follows.
\end{proof}

From now on we assume $\epsilon$ is as in Corollary \ref{c:loci}, hence $\ti{\kappa}$ is a smooth vector field on $M$ with vanishing locus $\C$ (equation \eqref{e:decomposeB}).

\subsection{Paradan-type deformation.}
Having constructed a suitable $G$-equivariant map $\ti{\mu}\colon M\rightarrow \g^*\simeq \g$ and associated vector field $\ti{\kappa}$, we will now apply the general results of Paradan \cite{ParadanRiemannRoch} and Paradan-Vergne \cite{WittenNonAbelian} to deduce a $[Q,R]=0$ theorem.

\begin{definition}
Assume we are in the setting of Definition \ref{d:RRnumber}. We will identify $TM\simeq T^*M$ using the Riemannian metric. Let $\xi \in T^*M$.  The symbol of the Dirac operator $\dirac^L$ is $\sigma(\xi)=c(\xi)$, Clifford multiplication on the pullback of the spinor module $S\otimes L$ to $T^*M$.  Using the vector field $\ti{\kappa}$ we define a deformed symbol
\[ \ti{\sigma}(\xi)=c(\xi-\ti{\kappa}).\]
\end{definition}
The symbols $\sigma,\ti{\sigma}$ are homotopic (through the family $t \mapsto c(\xi-t\ti{\kappa})$, $t \in [0,1]$), hence define the same class in the K-theory $\tn{K}_G^0(T^*M)$.  Following Paradan \cite{ParadanRiemannRoch} we will consider $\ti{\sigma}$ as a transversely elliptic symbol, in order to take advantage of the extra flexibility allowed for homotopies of such symbols.

Let $Y$ be a (possibly non-compact) $G$-manifold, and let $T_G^*Y\subset T^*Y$ be the conormal space to the $G$-orbit directions. The compactly-supported $G$-equivariant K-theory $\tn{K}^0_G(T_G^*Y)$ can be described in terms of equivalence classes of pairs $(E,\gamma)$, where $E$ is a $\bZ_2$-graded $G$-equivariant vector bundle over $T_G^*Y$, and $\gamma \in \End(E)$ is a $G$-equivariant odd bundle endomorphism that restricts to an isomorphism outside a compact set. Atiyah \cite{AtiyahTransEll} defined an analytic index map
\begin{equation} 
\label{e:anind}
\index_G \colon \tn{K}^0_G(T_G^*Y)\rightarrow R^{-\infty}(G),
\end{equation}
extending the more familiar analytic index map $\tn{K}^0_G(T^*Y)\rightarrow R(G)$, where $R^{-\infty}(G)$ is the formal completion of the representation ring, the set of possibly infinite formal integer linear combinations of irreducible characters $\chi_\lambda$ where $\lambda$ is a dominant weight.  

The restriction of $\ti{\sigma}$ to $T_G^*M$ determines a class $[\ti{\sigma}]\in \tn{K}^0_G(T^*_GM)$. Since the analytic index map \eqref{e:anind} extends the ordinary analytic index map, we have
\begin{equation}
\label{e:extendind}
\index_G(\dirac^L)=\index_G([\ti{\sigma}]).
\end{equation}

Since $c(\xi-\ti{\kappa})$ is invertible except when $\xi=\ti{\kappa}(m)$, the subset of $T_G^*M$ where $\ti{\sigma}$ fails to be invertible is $\C=\ti{\kappa}^{-1}(0_M) \subset T_G^*M$, the vanishing locus of $\ti{\kappa}$ viewed as a subset of the zero section in $T^*_GM$.  Atiyah \cite{AtiyahTransEll} established an excision-type property of the index that applies in this situation. Let $U_\beta$ be a $G$-invariant open neighborhood of $\C_\beta=G\cdot (M^\beta \cap \mu^{-1}(\beta))\subset \C$. We may assume the $U_\beta$, $\beta \in \B$ are sufficiently small that $\ol{U}_{\beta_1}\cap \ol{U}_{\beta_2}=\emptyset$ for $\beta_1\ne \beta_2$. Let $\ti{\sigma}_\beta$ be the restriction of $\ti{\sigma}$ to $T^*U_\beta$; since $\ti{\sigma}_\beta \upharpoonright T_G^*U_\beta$ is invertible outside the compact subset $\C_\beta \subset T_G^*U_\beta$, we obtain a well-defined class $[\ti{\sigma}_\beta] \in \tn{K}^0_G(T^*_GU_\beta)$. Then the excision property (and \eqref{e:extendind}) says that the equation 
\begin{equation} 
\label{e:locformula}
\index_G(\dirac^L)=\index_G([\ti{\sigma}])=\sum_{\beta \in \B}\index_G([\ti{\sigma}_\beta]) 
\end{equation}
holds in $R^{-\infty}(G)$.  Equation \eqref{e:locformula} (or more explicit versions thereof, cf. \cite[Theorem 8.6]{WittenNonAbelian}) are referred to as \emph{non-abelian localization formulas} in K-theory.

\subsection{The $[Q,R]=0$ theorem.}\label{s:qr}
We now formulate a $[Q,R]=0$ theorem for compact Hamiltonian log symplectic manifolds in the special case that $G$ acts freely on $\mu^{-1}(0)$.

Let $(M,\Z,\omega,\mu)$ be a Hamiltonian log symplectic manifold. Assume $G$ acts freely on $\mu^{-1}(0)$.  Then one can show \cite{LLSS1} that 
\begin{enumerate}
\item $\mu^{-1}(0)$ and $M_0=\mu^{-1}(0)/G$ are smooth. An orientation on $M$ induces an orientation on $M_0$.
\item For each $Z \in \Z$ that intersects $\mu^{-1}(0)$, $Z_0=(Z\cap \mu^{-1}(0))/G$ is a smooth hypersurface in $M_0$. Hence there is a normal crossing divisor $\Z_0$ in $M_0$. If $\Z$ admits global defining functions then $\Z_0$ does as well.
\item There is a unique log symplectic form $\omega_0$ on $(M_0,\Z_0)$ such that $p^*\omega_0=\iota^*\omega$, where $p\colon \mu^{-1}(0)\rightarrow M_0$ is the quotient map and $\iota\colon \mu^{-1}(0)\hookrightarrow M$ is the inclusion map.  If $L$ is a $G$-equivariant prequantum line bundle on $M$, then $L_0=L|_{\mu^{-1}(0)}/G$ is a prequantum line bundle on $M_0$.
\item There is a $G$-equivariant diffeomorphism $\varphi \colon \mu^{-1}(0) \times U_{\g^*} \rightarrow U\subset M$, where $U_{\g^*}$ is a $G$-invariant open neighborhood of $0 \in \g^*$ and $U$ is a $G$-invariant open neighborhood of $\mu^{-1}(0)$ in $M$ having the two properties: (i) for each $Z \in \Z$, $\varphi^{-1}(Z\cap U)=(Z\cap \mu^{-1}(0))\times U_{\g^*}$, (ii) if $\tn{pr}_j$, $j=1,2$ are the projection maps to the first and second factors respectively in $\mu^{-1}(0)\times U_{\g^*}$, then
\[ \varphi^*\omega=\tn{pr}_1^*p^*\omega_0+\d \pair{\tn{pr}_2}{\tn{pr}_1^*\theta} \]
where $\theta$ is a connection on the principal $G$-bundle $p\colon \mu^{-1}(0)\rightarrow M_0$.
\end{enumerate}
All except (d) are routine.

\begin{theorem}
\label{t:qr}
Let $(M,\Z,\omega,\mu,L)$ be a compact prequantized Hamiltonian log symplectic manifold such that $\Z$ admits global defining functions and $\mu$ is proper.  Assume $G$ acts freely on $\mu^{-1}(0)$ and let $(M_0,\Z_0,\omega_0)$ be the reduced log symplectic manifold with prequantum line bundle $L_0$, and the induced orientation.  Then
\[ \index_G(\dirac^L_M)^G=\index(\dirac_{M_0}^{L_0}).\]
\end{theorem}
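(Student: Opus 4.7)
The proof strategy is to apply the non-abelian localization formula \eqref{e:locformula}, take $G$-invariants of each summand, and show that only the contribution at $\beta=0$ survives while this contribution recovers the reduced-space index. Concretely, it suffices to establish the two claims (i) $\index_G([\ti{\sigma}_\beta])^G = 0$ for every $\beta \in \B\setminus\{0\}$, and (ii) $\index_G([\ti{\sigma}_0])^G = \index(\dirac^{L_0}_{M_0})$. The geometric observation that makes both accessible is that, by Corollary \ref{c:loci}, each vanishing locus $\C_\beta$ lies in $M\setminus\cup \Z_{\ne 0}$, where $\mu=\ti{\mu}$ and where the spinor module $S$ is, up to Clifford multiplication by a flat trivial factor, the usual spinor module associated to the $\omega$-compatible complex structure on $T_\Z M$. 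Thus the local geometry near every $\C_\beta$ is of the same type that appears in the arguments of Paradan and Paradan--Vergne in the symplectic setting.

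For (i), I would follow the weight-support argument of \cite{ParadanRiemannRoch, WittenNonAbelian}. Fix $0\ne \beta \in \B$ and a connected component $F$ of $M^\beta \cap \mu^{-1}(\beta)$. A $G$-equivariant normal-form theorem near $G\cdot F$ (applicable because $G\cdot F \subset M\setminus\cup\Z_{\ne 0}$) identifies a $G$-invariant tubular neighborhood with $G \times_{G_\beta}(F \times \N_F)$, where $\N_F$ is the symplectic normal bundle carrying a compatible $G_\beta$-invariant complex structure; the $\beta$-weights on $\N_F$ are non-zero and of definite sign. An induction on the dimension of $M^\beta$ combined with an explicit computation of the transversely elliptic index on the linear model $\N_F$ then shows that every highest weight $\lambda$ appearing in $\index_G([\ti{\sigma}_\beta])$ satisfies $\pair{\lambda+\rho}{\beta} \ge \|\beta\|^2 > 0$. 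In particular $\lambda = 0$ is excluded, so the $G$-invariant part vanishes.

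For (ii), Corollary \ref{c:loci} gives $\C_0 = \mu^{-1}(0)$, on which $G$ acts freely by hypothesis, and property (d) of the reduction provides a $G$-equivariant identification of a neighborhood $U$ of $\mu^{-1}(0)$ with the minimal coupling model $\mu^{-1}(0)\times U_{\g^*}$ fibred over $M_0$. Since $\ti{\mu}=\mu$ throughout $U$, the restriction of $\ti{\kappa}_0$ to $U$ coincides with the Kirwan vector field of the standard momentum map of the linear fibre $U_{\g^*}\subset \g^*$. Up to homotopy through transversely elliptic symbols, $[\ti{\sigma}_0]$ thus splits as an external product of the elliptic Dirac symbol on $M_0$ twisted by $L_0$ and the standard Paradan model symbol on $T_G^*(G\times \g^*)$; the latter has $G$-equivariant index with $G$-invariant part equal to $1\in R(G)$. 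Multiplicativity of the analytic index under this external product yields $\index_G([\ti{\sigma}_0])^G = \index(\dirac^{L_0}_{M_0})$.

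The principal technical obstacle is verifying that the symplectic-category normal-form and multiplicativity results invoked above extend to the present log symplectic setting. On $\C_\beta$ itself this is automatic because $\C_\beta\subset M\setminus \cup \Z_{\ne 0}$ and the smooth pair $(\ti{\omega},\ti{\mu})$ agrees there with $(\omega,\mu)$. One must, however, accommodate the hypersurfaces $Z\in \Z\setminus \Z_{\ne 0}$ of vanishing modular weight, which can intersect $\C_\beta$ and $\mu^{-1}(0)$ and descend to the normal crossing divisor $\Z_0$ on $M_0$. By choosing the local models to respect the defining functions of these hypersurfaces, the restriction of $S$ to a neighborhood of $\C_\beta$ becomes precisely the spinor module of the log tangent bundle of the reduced log symplectic manifold, matching the operator $\dirac^{L_0}_{M_0}$ on the reduced side. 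With these compatibilities in place, the proofs of (i) and (ii) reduce to the corresponding statements in \cite{ParadanRiemannRoch, WittenNonAbelian}.
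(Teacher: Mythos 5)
Your overall strategy is the same as the paper's: apply the non-abelian localization formula \eqref{e:locformula}, take $G$-invariants, kill the $\beta\neq 0$ terms, and match the $\beta=0$ term to the reduced-space index via the normal form. That is indeed how the paper proceeds, and both parts rest, as you say, on the Paradan / Paradan--Vergne machinery.

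There is, however, one claim in your step (i) that is false in the present setting and is precisely the point where the log symplectic case genuinely differs from the symplectic one. You assert that the $\beta$-weights on the normal bundle $\N_F$, with its $\omega$-compatible complex structure $J$, are ``of definite sign.'' In the log symplectic setting they are not: the components $\C_\beta$, although avoiding $\cup\Z_{\ne 0}$ by Corollary \ref{c:loci}, can lie inside hypersurfaces of $\Z$ with vanishing modular weight, where the $\omega$-compatible complex structure $J$ on $T_\Z M$ is ``flipped'' relative to the one compatible with the polarization $J_\beta = A_\beta/|A_\beta|$. The paper's lemma explicitly notes that the $b$-eigenvalues on $(\nu_m',J)^{1,0}$ have mixed signs, and positivity of the total $b$-eigenvalue is recovered only after (a) cancelling the negative $J$-eigenvalues against the corresponding positive $J_\beta$-eigenvalues in the determinant line $\det(\nu_m',J_\beta)^{1,0}\otimes\det(\nu_m',J)^{1,0}$, (b) a separate argument identifying the contribution of the orbit directions $\g/\g_\beta\subset\nu_m$ with $j_\beta$ via the momentum map equation, and (c) invoking Kostant's formula \eqref{e:Kostant} to get the strictly positive eigenvalue $2\pi\|\beta\|^2$ on $L_m^2$. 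Your weight-support bound $\pair{\lambda+\rho}{\beta}\ge\|\beta\|^2$ is the right target, and it does follow from \cite[Theorem 9.6]{WittenNonAbelian}, but not from sign-definiteness of the $J$-weights; the eigenvalue-cancellation computation is essential and you should not expect to borrow it unchanged from the symplectic setting.

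For step (ii), your external-product / multiplicativity argument is a correct (and somewhat more explicit) reconstruction of the content of \cite[Theorem 9.6]{WittenNonAbelian}. The paper simply cites that theorem and then focuses the effort on showing that the resulting quotient spinor module $S_0'$ in \eqref{e:spinorquotient} is homotopic to the one determined by the reduced log symplectic structure, using the normal form (d) and the compatibility of the stable isomorphism with fibre-bundle splittings (Remark \ref{r:fibre}). Your last paragraph correctly anticipates that this spinor-module matching --- and its compatibility with the hypersurfaces of zero modular weight that descend to $\Z_0$ --- is the remaining work; the paper makes this precise by verifying that $(p\circ\tn{pr}_1)^*J_0\oplus J_{\g_\bC}$ is $\omega$-compatible near $\mu^{-1}(0)$.
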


\begin{remark}
In \cite{BLSbsympl} this result was proved in the special case where the divisor $\Z$ has no crossings and the modular weights are non-zero. Under these assumptions the complement $M \backslash \cup \Z$ is a symplectic Hamiltonian $G$-space with proper momentum map.  This made it possible to deduce the result from a theorem of Ma and Zhang. 
\end{remark}

\begin{remark}
Theorem \ref{t:qr} is closely analogous to the symplectic $[Q,R]=0$ theorem \cite{MeinrenkenSymplecticSurgery}. We comment that there is a very general $[Q,R]=0$-type theorem due to Paradan and Vergne \cite{ParadanVergneSpinc} that applies to arbitrary $G$-equivariant Spin$_c$ Dirac operators on compact manifolds, hence applies to the operator $\dirac^L_M$. However the Paradan-Vergne result is \emph{different} from Theorem \ref{t:qr}; in particular their result employs a different momentum map and moreover $\index_G(\dirac^L_M)^G$ can have contributions from more than one of its level sets. 
\end{remark}

Using a shifting trick established in \cite{LLSS1}, Theorem \ref{t:qr} implies a similar result for the multiplicity of any irreducible representation $V_\lambda \in R(G)$ with highest weight $\lambda$:
\begin{corollary}
\label{c:shifting}
Let $(M,\Z,\omega,\mu)$ be as in Theorem \ref{t:qr}.  Assume $G_\lambda$ acts freely on $\mu^{-1}(\lambda)$.  Let $M_{\lambda}=\mu^{-1}(\lambda)/G_\lambda$ be the reduced log symplectic manifold \cite{LLSS1}, with prequantum line bundle $L_{\lambda}=(L|_{\mu^{-1}(\lambda)}\otimes \bC_{-\lambda})/G_\lambda$, and the induced orientation.  Then
\[ \big(\index_G(\dirac^L_M)\otimes V_\lambda^*\big)^G=\index(\dirac_{M_\lambda}^{L_\lambda}).\]
\end{corollary}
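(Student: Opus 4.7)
The plan is to deduce Corollary \ref{c:shifting} from Theorem \ref{t:qr} via the classical shifting trick, as anticipated in the statement. Let $\O_\lambda \subset \g^*$ denote the coadjoint orbit through $\lambda$, a compact Hamiltonian $G$-space carrying the Kirillov-Kostant-Souriau symplectic form $\omega_\lambda$, inclusion $\iota_\lambda$ as momentum map, and the prequantum line bundle $G\times_{G_\lambda}\bC_\lambda$, whose equivariant index is $V_\lambda$ by Borel-Weil. I would work with $\O_\lambda^-$, the same manifold equipped with $(-\omega_\lambda,-\iota_\lambda)$ and prequantum line bundle $G\times_{G_\lambda}\bC_{-\lambda}$, chosen so that its equivariant index is $V_\lambda^*$. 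Being honestly symplectic, $\O_\lambda^-$ is in particular a compact log symplectic Hamiltonian $G$-manifold with empty divisor.

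Next I would form the product $M\times \O_\lambda^-$ with the diagonal $G$-action, the log symplectic form $\omega\oplus(-\omega_\lambda)$, momentum map $\mu-\iota_\lambda$, pulled-back normal crossing divisor $\Z$, and prequantum line bundle $L\boxtimes(G\times_{G_\lambda}\bC_{-\lambda})$. The hypotheses of Theorem \ref{t:qr} transfer to this product: it is compact, $\Z$ still admits global defining functions, and properness of $\mu-\iota_\lambda$ follows from properness of $\mu$ because $\iota_\lambda$ is bounded on $\O_\lambda$. Freeness of $G$ on $(\mu-\iota_\lambda)^{-1}(0)$ is equivalent, via the $G$-equivariant diffeomorphism $(\mu-\iota_\lambda)^{-1}(0)\simeq \mu^{-1}(\O_\lambda)$ given by projection to the first factor, to freeness of $G_\lambda$ on $\mu^{-1}(\lambda)$, which is the hypothesis. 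The reduction calculation established in \cite{LLSS1} then identifies the reduced space with $M_\lambda$; restricting the external prequantum line bundle to $\mu^{-1}(\lambda)$ (over which the second factor specializes to the fiber $\bC_{-\lambda}$ of $G\times_{G_\lambda}\bC_{-\lambda}$ over $\lambda$) and quotienting by $G_\lambda$ yields precisely $L_\lambda=(L|_{\mu^{-1}(\lambda)}\otimes\bC_{-\lambda})/G_\lambda$.

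Applying Theorem \ref{t:qr} to $M\times \O_\lambda^-$ then yields
\[ \index_G\bigl(\dirac^{L\boxtimes(G\times_{G_\lambda}\bC_{-\lambda})}_{M\times \O_\lambda^-}\bigr)^G = \index\bigl(\dirac^{L_\lambda}_{M_\lambda}\bigr). \]
Multiplicativity of the Dirac index on products---the Dirac operator on a product decomposes as a graded tensor product of the factor Dirac operators, so its $G$-equivariant index is the tensor product of the factor indices in $R(G)\otimes R(G)\to R(G)$---reduces the left-hand side to $\bigl(\index_G(\dirac^L_M)\otimes V_\lambda^*\bigr)^G$, giving the corollary. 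The main obstacle I anticipate is the careful bookkeeping of orientations and compatible complex structures: one must verify that the orientation on $\O_\lambda^-$ together with the stable almost complex structure from Corollary \ref{c:spinc} gives $+V_\lambda^*$ rather than $-V_\lambda^*$ (in view of Remark \ref{r:dependenceonorientation}), and that the orientation on $M_\lambda$ induced from the reduction of $M\times \O_\lambda^-$ agrees with the one declared in Theorem \ref{t:qr} and in the statement.
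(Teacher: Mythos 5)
Your proof is correct and is essentially the paper's argument: shift by a dual coadjoint orbit, apply Theorem \ref{t:qr} to the product, and use multiplicativity of the equivariant index. The paper phrases the shift slightly more cleanly by taking the coadjoint orbit $\O$ through $-\lambda$ with its standard Kirillov--Kostant--Souriau form and compatible complex structure (whose index is $V_\lambda^*$ by Borel--Weil), which is $G$-equivariantly isomorphic to your $\O_\lambda^-$ via $\xi\mapsto -\xi$; this choice sidesteps the orientation bookkeeping you flag at the end, since there is no sign to track from negating a symplectic form.
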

\begin{proof}
Let $\O\subset \g^*$ denote the coadjoint orbit containing $-\lambda$ equipped with the standard Kirillov-Kostant-Souriau symplectic form and compatible complex structure.  It admits the prequantum line bundle $E=G\times_{G_\lambda}\bC_{-\lambda}\rightarrow G/G_\lambda\simeq \O$, and the index of the corresponding twisted Dirac operator is $V_\lambda^*$.  By the shifting trick $M_{\lambda}$ is the reduced space at $0$ of the product $M\times \O$.  Applying Theorem \ref{t:qr} to $M\times\O$ yields
\[ \big(\index_G(\dirac^L_M)\otimes V_\lambda^*\big)^G=\index_G(\dirac_{M\times\O}^{L\boxtimes E})^G=\index(\dirac_{M_\lambda}^{L_\lambda}).\]
\end{proof}

We will deduce Theorem \ref{t:qr} from general theorems due to Paradan \cite{ParadanRiemannRoch} and Paradan-Vergne \cite{WittenNonAbelian}; these authors studied equation \eqref{e:locformula} in detail for deformations of Dirac-type symbols via $G$-equivariant maps $M\rightarrow \g$, as is the case in our situation, where the deformation $\ti{\kappa}$ is associated to the $G$-equivariant map $\ti{\mu}\colon M \rightarrow \g^*\simeq \g$.  We first prove a lemma.

\begin{lemma}
Let $(M,\Z,\omega,\mu)$ and $L$ be as in Theorem \ref{t:qr}. Then taking $G$-invariants of both sides of the non-abelian localization formula \eqref{e:locformula} yields
\begin{equation} 
\label{e:invt0}
\index_G(\dirac^L)^G=\index_G([\sigma_0])^G,
\end{equation}
or in other words, the only contribution to the trivial representation in \eqref{e:locformula} comes from $\beta=0$.  
\end{lemma}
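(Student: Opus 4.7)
The plan is to establish vanishing of the $G$-invariant part of each local contribution $\index_G([\ti{\sigma}_\beta])$ for $\beta \ne 0$, so that the non-abelian localization formula \eqref{e:locformula} collapses on $G$-invariants to the single $\beta = 0$ term, yielding $\index_G(\dirac^L)^G = \index_G([\sigma_0])^G$.

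First I would localize the analysis near $\C_\beta = G\cdot (M^\beta \cap \mu^{-1}(\beta))$. Via a $G$-invariant tubular neighborhood of the form $U_\beta \simeq G \times_{G_\beta} V_\beta$, where $V_\beta$ is a $G_\beta$-invariant slice containing $M^\beta \cap \mu^{-1}(\beta)$, Atiyah's induction formula for analytic indices of transversely elliptic symbols \cite{AtiyahTransEll} identifies
\[ \index_G([\ti{\sigma}_\beta]) = \tn{Ind}_{G_\beta}^G \,\index_{G_\beta}([\ti{\sigma}_\beta]|_{V_\beta}) \]
in $R^{-\infty}(G)$. By Frobenius reciprocity, the trivial $G$-isotype of the left-hand side equals the trivial $G_\beta$-isotype of the local index on $V_\beta$, so the task reduces to controlling the $G_\beta$-weights of that local index.

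Next I would apply the positivity estimate of Paradan \cite{ParadanRiemannRoch} and Paradan-Vergne \cite{WittenNonAbelian}. By Corollary \ref{c:loci}, the smoothed map $\ti{\mu}$ agrees with $\mu$ on $\C$, hence $\ti{\mu}=\beta$ on $\C_\beta$. Combining this with Kostant's formula \eqref{e:Kostant} for the infinitesimal $G$-action on the prequantum line bundle $L$, their estimate for Spin$_c$ Dirac operators deformed via a $G$-equivariant map $M \to \g^* \simeq \g$ shows that every $T$-weight $\lambda$ occurring in $\index_{G_\beta}([\ti{\sigma}_\beta]|_{V_\beta})$ satisfies $\pair{\lambda}{\beta} > 0$. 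In particular $\lambda = 0$ is excluded whenever $\beta \ne 0$, which gives the desired vanishing.

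The main obstacle is verifying that the Paradan--Vergne machinery applies cleanly in our setting, where the deformation is induced by $\ti{\mu}$ (with degenerate smooth 2-form $\ti{\omega}$) rather than by a genuine symplectic momentum map, since $\mu$ itself blows up along $\cup \Z_{\ne 0}$. The inputs their framework requires are: (i) smoothness and $G$-equivariance of the deforming map, (ii) compactness of the vanishing locus of the induced vector field, and (iii) compatibility with a prequantum connection at the points of that vanishing locus. Items (i) and (ii) hold by Definition \ref{d:tildevariables} and Proposition \ref{p:finite}, while (iii) follows from Corollary \ref{c:loci} together with Kostant's formula applied to $(L,\nabla^L,\bar{\mu})$. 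With these verified, Paradan's argument proceeds unchanged in the log symplectic setting and delivers the lemma.
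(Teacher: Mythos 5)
There is a genuine gap. You correctly identify the structure of the argument — localize near each $\C_\beta$, invoke the positivity criterion of Paradan--Vergne (Theorem~9.6 of \cite{WittenNonAbelian}), and conclude that only $\beta=0$ contributes to the trivial isotype — but you treat the positivity estimate as a black box ("Paradan's argument proceeds unchanged in the log symplectic setting"). That is exactly what cannot be assumed and what the paper spends the bulk of its proof establishing. The Paradan--Vergne criterion is a \emph{hypothesis} to be verified for the specific Spin$_c$ structure at hand: one must show that the eigenvalue of $b=-\sqrt{-1}\,\beta$ on the induced $\Cl(TM_\beta)$-spinor module $S_\beta$ is strictly positive. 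This eigenvalue is computed from the anti-canonical line bundle, which in the log symplectic case is $\L\simeq\det_\bC(T_\Z M)$ by \eqref{e:detline} — a nontrivial feature of the construction, not the usual $\det_\bC(TM)$. The verification requires decomposing $T_\Z M|_{M_\beta}\simeq T_{\Z\cap M_\beta}M_\beta\oplus\nu$, comparing the $\omega$-compatible complex structure $J$ with the $\beta$-induced complex structure $J_\beta$ on $\nu$, and in particular checking that on the orbit directions $\g/\g_\beta\subset\nu_m$ the form $\omega_m$ is compatible with $j_\beta$: this is the computation $\omega_m(X_M,(\ad_\beta X)_M)=\|\ad_\beta X\|^2>0$, which uses the momentum map equation for the genuine log symplectic form $\omega$ and the fact (Corollary~\ref{c:loci}) that $\mu(m)=\beta$ at points of $\C_\beta$. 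Only after these cancellations does the $\Ch^g$-type positivity reduce to the eigenvalue of $b$ on $L_m$, which Kostant's formula gives as $2\pi\|\beta\|^2>0$.

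Your checklist (i)--(iii) conflates the hypotheses needed to \emph{state} the Paradan--Vergne non-abelian localization formula (which you have already invoked in \eqref{e:locformula}) with the extra positivity hypothesis of their Theorem~9.6, which is what drives the vanishing of the $\beta\ne0$ terms. Item (iii), "compatibility with a prequantum connection," is not a sufficient substitute for the full eigenvalue computation: Kostant's formula alone only controls the contribution of $L$, not the contributions of the spinor module and the normal bundle. The claim that the relevant $T$-weights satisfy $\pair{\lambda}{\beta}>0$ is the \emph{conclusion} of that eigenvalue computation, not a direct citation. To repair the proof you would need to carry out the log-symplectic analogue of Paradan's positivity estimate along the lines sketched above; the induction/Frobenius reduction to $G_\beta$ is a reasonable alternative packaging of the same step, but it does not sidestep the computation.
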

\begin{proof}
A criterion for exactly this result is given in \cite[Theorem 9.6]{WittenNonAbelian} (see also Definitions 7.1, 9.2 as well as the paragraph below Theorem 8.6 of \cite{WittenNonAbelian} for explanations of the notation). To state it, we introduce some notation. We shall need to consider vector bundles carrying more than one complex structure below.  If $V$ is a real vector bundle equipped with a complex structure $I \in \Gamma(\End(V))$, then we write $(V,I)^{1,0}$, resp. $(V,I)^{0,1}$ for the $+\sqrt{-1}$, resp. $-\sqrt{-1}$ eigenbundles of $I$ in $V\otimes \bC$. 

Let $0 \ne \beta \in \B$, and let $M_\beta \subset M^\beta$ be the union of components of $M^\beta$ that intersect $\mu^{-1}(\beta)$ non-trivially. Let $\nu$ be the normal bundle to $M_\beta$ in $M$.  We may equip $\nu$ with the complex structure $J_\beta=A_\beta/|A_\beta|$, where $A_\beta$ denotes the (skew-symmetric) endomorphism of $\nu$ given by the action of $\beta$.  Then the complex exterior algebra $\wedge (\nu,J_\beta)^{0,1}$ is a $\Cl(\nu)$-module.  Let $\g_\beta \subset \g$ be the infinitesimal stabilizer of $\beta$ under the adjoint action.  We regard $\g/\g_\beta$ as a complex vector space, with complex structure $j_\beta=\ad_\beta/|\ad_\beta|$; equivalently, since $\beta \in \t_+$, $(\g/\g_\beta,j_\beta)^{1,0}$ is a sum of positive root spaces. Then the criterion in \cite[Theorem 9.6]{WittenNonAbelian} states that it suffices to show that the (locally constant, real) eigenvalue for the action of $b=-\sqrt{-1}\beta$ on the $\Cl(TM_\beta)$-spinor module 
\[ S_\beta=\Hom_{\Cl(\nu)}(\wedge (\nu,J_\beta)^{0,1},S\otimes L|_{M_\beta})\otimes \tn{det}(\g/\g_\beta,j_\beta)^{0,1} \]
is \emph{positive}. (Note that there is indeed only a single eigenvalue for each component of $M_\beta$ by Schur's lemma, because $b$ commutes with the $\Cl(TM_\beta)$-action.)  This eigenvalue is equal to one half the action of $b$ on the anti-canonical line bundle of $S_\beta$:
\begin{equation} 
\label{e:anticanon}
\L_\beta=\tn{det}(\nu,J_\beta)^{1,0}\otimes \big(\tn{det}(\g/\g_\beta,j_\beta)^{0,1}\big)^2\otimes \tn{det}(T_\Z M,J)^{1,0}|_{M_\beta}\otimes L^2|_{M_\beta}
\end{equation}
where we used equation \eqref{e:detline} for the anti-canonical line bundle of $S$.  The hypersurfaces in $\Z$ are automatically transverse to $M_\beta$, because they are preserved by the $G$-action. Thus $T_\Z M|_{M_\beta}\simeq T_{\Z \cap M_\beta}M_\beta\oplus \nu$, and since $\beta$ acts trivially on $T_{\Z\cap M_\beta}M_\beta$, there is no harm in omitting this in \eqref{e:anticanon}, i.e. it suffices to consider the action of $b$ on the line bundle
\begin{equation} 
\label{e:anticanon2}
\L_\beta'=\tn{det}(\nu,J_\beta)^{1,0}\otimes \big(\tn{det}(\g/\g_\beta,j_\beta)^{0,1}\big)^2\otimes \tn{det}(\nu,J)^{1,0}|_{M_\beta}\otimes L^2|_{M_\beta}
\end{equation}
We are now in essentially the same situation encountered in the symplectic setting.  Since the action is locally constant on $M_\beta$, it suffices to study \eqref{e:anticanon2} at a single point $m$ in each component of $\C_\beta$. The eigenvalues for the action of $b$ on $(\g/\g_\beta,j_\beta)^{0,1}$ are negative by construction.  The fibre $\nu_m$ is a direct sum $\nu'_m\oplus \g/\g_\beta$, where $\g/\g_\beta$ is identified with the orthogonal complement to $\g_\beta$ in $\g$, and is embedded in $\nu_m$ as a subset of the $G$-orbit directions.  Note that the complex structure $j_\beta$ on $\g/\g_\beta$ is compatible with the restriction of the log symplectic form to $\g/\g_\beta \subset \nu_m$: for $X \in \g/\g_\beta$,
\[ \omega_m(X_M,(\ad_\beta X)_M)=-\d_m\pair{\mu}{X}\big((\ad_\beta X)_M\big)=-\pair{\ad_\beta^2 X}{X}=\|\ad_\beta X\|^2>0\] 
where in the first equality we used the momentum map equation, in the second equality we used equivariance of the momentum map ($\d_m \mu(Y_M)=-\ad_Y^*\beta=\ad_\beta Y$, as $\mu(m)=\beta$ and using $\g\simeq \g^*$), and in the third equality we used that $\ad_\beta$ is skew adjoint.  Thus performing a small homotopy of $J$ if necessary, we may assume that $J$ preserves $\g/\g_\beta\subset \nu_m$ and equals $j_\beta$ on this subspace; it is then clear that the eigenvalues of $b$ on $(\g/\g_\beta,J|_{\g/\g_\beta})^{1,0}$ are positive. Hence after cancelling the $\g/\g_\beta$ contributions, we are left to consider the action of $b$ on the complex line
\[ \tn{det}(\nu_m',J_\beta)^{1,0}\otimes \tn{det}(\nu_m',J)^{1,0}\otimes L_m^2.\]
The eigenvalues of $b$ on $(\nu_m',J_\beta)^{1,0}$ are positive by construction, the eigenvalues on $(\nu_m',J)^{1,0}$ have mixed signs, but the negative ones cancel with the corresponding eigenvalues for the action on $(\nu_m',J_\beta)^{1,0}$.  Hence the eigenvalue for the action of $b$ on $\tn{det}(\nu_m',J_\beta)^{1,0}\otimes \tn{det}(\nu_m',J)^{1,0}$ is non-negative.  On the other hand, equation \eqref{e:Kostant} shows that the eigenvalue of $b$ on $L_m$ is $2\pi \|\beta\|^2>0$. 
\end{proof}

\begin{proof}[Proof of Theorem \ref{t:qr}]
Since $G$ acts freely on $\mu^{-1}(0)$, there is a direct sum decomposition
\[ TM|_{\mu^{-1}(0)}\simeq p^*TM_0\oplus \underline{\g} \oplus \underline{\g}^*.\]
The trivial bundle $\underline{\g}\oplus \underline{\g}^*\simeq \underline{\g}_\bC$ carries a canonical complex structure, and hence a spinor module $\wedge \underline{\g}_\bC$. We therefore obtain an induced spinor module for $TM_0$:
\begin{equation} 
\label{e:spinorquotient}
S_0'=\tn{Hom}_{\Cl(\g_\bC)}(\wedge \ul{\g}_\bC,S|_{\mu^{-1}(0)})/G.
\end{equation}
By \cite[Theorem 9.6]{WittenNonAbelian}, $\index_G(\sigma_0)^G$ equals the index of a Dirac operator on $M_0$, acting on sections of $S_0'\otimes L_0$. To complete the proof it suffices to show that $S_0'$ is homotopic to the spinor module $S_0$ on $M_r$ determined by its log symplectic structure and orientation.  

We can deduce this from the normal form given in item (d) near the beginning of Section \ref{s:qr}.  The map $p\circ \tn{pr}_1 \colon U \rightarrow M_0$ is a fibre bundle with fibres diffeomorphic to $T^*G$, and by item (d), $T_{\Z\cap U} U\simeq (p\circ \tn{pr}_1)^* T_{\Z_0}M_0\oplus \underline{\g}_\bC$ (identifying $\g\oplus \g^*$ with $\g_\bC$ also).  Using the formula for the log symplectic form in item (d), it follows that the complex structure $(p\circ \tn{pr}_1)^*J_0\oplus J_{\g_\bC}$ on $T_{\Z\cap U}U$ is compatible with $\omega$, where $J_0$ is a $\omega_0$-compatible complex structure on $T_{\Z_0}M_0$ and $J_{\g_\bC}$ is the standard complex structure on $\ul{\g}_\bC$.  Using Remark \ref{r:fibre}, the stable isomorphism may be chosen such that the spinor module $S|_U=(p\circ \tn{pr}_1)^\ast S_0 \hat{\otimes} \wedge \ul{\g}_\bC$.  This implies the desired result.
\end{proof}

\begin{example}
\label{ex:2sphere5}
We return to Example \ref{ex:2sphere4}, carrying over the notation introduced there. Suppose $n_1\le n_2$ (the other case being analogous).  The reduced spaces $\mu^{-1}(j)/S^1$, $j \in \bZ$ are either (i) empty if $j < n_1$, (ii) a single point with positive orientation if $n_1\le j<n_2$, (iii) a pair of points with opposite orientations if $n_2\le j$.  Cases (i), (iii) have vanishing quantization. Thus according to the $[Q,R]=0$ theorem
\[ \index_{S^1}(\dirac^L)(t)=\sum_{j=n_1}^{n_2-1} t^j \]
in agreement with \eqref{e:AB}.
\end{example}

\subsection{Brief remarks on the singular case.}
Let $(M,\Z,\omega,\mu)$ be a compact log symplectic Hamiltonian $G$-space.  If the action of $G$ on $\mu^{-1}(0)$ is only locally free instead of free, then the reduced space $M_0=\mu^{-1}(0)/G$ is an orbifold. In order to generalize Theorem \ref{t:qr} to this situation, one option is to define the (orbifold) spinor module on the reduced space $M_0$ by equation \eqref{e:spinorquotient}, and define the quantization of $M_0$ to be the index of the corresponding Dirac operator twisted by $L_0$.  Then Theorem \ref{t:qr} holds with the same proof. Alternatively one can extend the definitions and constructions to orbifolds.  We briefly indicate how this can be done.

Let $M^n$ be an orbifold. For $p \in M$ let $\Gamma_p$ denote the isotropy group; we allow the isotropy group to act non-effectively in orbifold charts.  We define a (simple) normal crossing divisor in $M$ to be a finite collection $\Z$ of codimension $1$ suborbifolds such that if $Z_1,...,Z_k \in \Z$ and $p \in Z_1 \cap \cdots \cap Z_k$, there is a local orbifold chart $\varphi \colon U \rightarrow \bR^n/\Gamma_p$ centred at $p$ such that
\begin{enumerate}
\item $\bR^n=\bR^k\times \bR^{n-k}$ and $\Gamma_p$ acts only on the second factor;
\item for $j=1,...,k$, $\varphi$ maps $Z_j \cap U$ into a subset of $\{x_j=0\}/\Gamma_p$, where $x_1,...,x_k$ are the coordinates in $\bR^k$.
\end{enumerate}
Assuming $\Z$ admits global defining functions, this definition implies that each intersection $Z_1\cap \cdots \cap Z_k$ has a neighborhood in $M$ of the form $\bR^k\times (Z_1 \cap \cdots \cap Z_k)$, a product of a manifold with the suborbifold $Z_1 \cap \cdots \cap Z_k$.

By working in orbifold charts, one verifies that the sheaf of smooth sections of $TM$ tangent to all $Z \in \Z$ forms the sheaf of smooth sections of an orbifold vector bundle $T_\Z M$, which can moreover be equipped with the obvious analogue of the Lie algebroid structure present in the manifold case. The definition of a log symplectic form then carries over. Using the remark above regarding existence of a neighborhood of $Z_1\cap \cdots \cap Z_k$ of product form, one can carry through the construction of the stable isomorphism between $TM$, $T_\Z M$. Hence $TM$ becomes stably almost complex as in the manifold case, and the rest of the definition of the quantization carries through. The statement and proof of Theorem \ref{t:qr} then generalize to the case where $G$ acts locally freely on $\mu^{-1}(0)$.

In the still more general setting with no assumptions on the action of $G$ on $\mu^{-1}(0)$, one can obtain an analogue of Theorem \ref{t:qr} using a shift desingularization to a nearby weakly regular value as in \cite{MeinrenkenSjamaar}.

\section{Toric log symplectic manifolds.}\label{s:toric}
In this section we specialize our result to the toric log symplectic manifolds defined and classified in \cite{guillemin2014toric} (for the case without crossings) and \cite{gualtieri2017tropical} (for the case with crossings). We begin by giving a brief introduction to the framework in \cite{gualtieri2017tropical}. 

Recall that in the symplectic case, there is a systematic procedure for constructing all compact toric examples. In brief, given a convex polytope $\Delta \subset \t^*$ satisfying the Delzant condition, one obtains a toric symplectic manifold from the trivial principal $T$-bundle $\Delta \times T$ by performing symplectic cutting along each of the codimension $1$ faces of $\Delta$. The authors of \cite{gualtieri2017tropical} generalized this construction to the log symplectic case, and proved a corresponding classification theorem. Two of the main new features they discovered were (i) the codomain $\t^*$ of the momentum map can be replaced by a more complicated `tropical welded space'; (ii) the principal $T$-bundle used in the construction can be non-trivial.

\subsection{Tropical welded spaces and log affine polytopes.}
Let $n>0$ be even and let $\t^*$ be a real vector space of dimension $n/2$ (soon to be the dual of the Lie algebra of a compact torus $T$). To avoid confusion below, we will use the notation $\T^*$ to denote the space $\t^*$ viewed as an abelian Lie group with Lie algebra $\t^*$. A \emph{tropical welded space} \cite[Definition 3.8]{gualtieri2017tropical} $(\Sigma,\D,\xi)$ is a smooth connected $\T^*$-manifold $\Sigma^{n/2}$, equipped with a normal crossing divisor $\D$ and a closed non-degenerate $\t^*$-valued log $1$-form $\xi \in \Omega^1(\Sigma,\D)\otimes \t^*$, such that the image of $[\xi]$ in $H^1(\Sigma)\otimes \t^*$ under the Mazzeo-Melrose map vanishes.  Here `non-degenerate' means that the map $T_\D \Sigma \rightarrow \Sigma\times \t^*$ induced by $\xi$ is an isomorphism; the closedness of $\xi$ implies that this map is in fact an isomorphism of Lie algebroids, where $\Sigma \times \t^*$ is identified with the action Lie algebroid $\Sigma\rtimes \t^*$ for the $\T^*$ action.  

Any such space may be constructed by gluing together a number of copies of partial compactifications of $\t^*$ (\cite[Section 3.2]{gualtieri2017tropical}), and in particular the connected components of $\Sigma \backslash \cup \D$ are affine spaces modelled on $\t^*$. Let $\Sigma_0 \subset \Sigma \backslash \cup \D$ be a connected component, and choose an origin $p_0 \in \Sigma_0$ so that $\Sigma_0$ becomes identified with $\t^*$. Given any point $p \in \Sigma \backslash \cup \D$, we may obtain an element of $\t^*$ by taking the principal value integral of $\xi$ along any smooth curve from $p_0$ to $p$ which is transverse to $\D$; the vanishing of the component of $[\xi]$ in $H^1(\Sigma)\otimes \t^*$ implies that the result does not depend on the choice of curve.  Thus by choosing a single basepoint in $\Sigma \backslash \cup \D$ one obtains a consistent identification of each component of $\Sigma\backslash \cup \D$ with $\t^*$.

A \emph{log affine hyperplane} in $\Sigma$ is a smooth connected embedded hypersurface $H\subset \Sigma$ not contained in $\D$, which is preserved under translations by some hyperplane in $\T^*$.  An \emph{admissible log affine polytope} (compare \cite[Definition 5.3]{gualtieri2017tropical}) is a compact submanifold with corners $\Delta^n \subset \Sigma^n$, such that the closures of the codimension $1$ strata of $\partial \Delta$ are contained in log affine hyperplanes, and every stratum of $\partial \Delta$ intersects every stratum of $\D$ transversely.  An oriented admissible log affine polytope has a well-defined regularized volume, defined as the principal value integral of the top power of $\xi$ over $\Delta$. One says $\Delta$ is \emph{convex} if its intersection with each component of $\Sigma \backslash \cup \D$ is convex. There is also a version of the Delzant condition, see \cite[Definition 5.20]{gualtieri2017tropical}.

\subsection{Construction of toric log symplectic manifolds.}
Let $T$ be a compact torus with Lie algebra $\t$.  Let $(\Sigma,\D,\xi)$ be a tropical welded space where $\xi \in \Omega^1(\Sigma,\D)\otimes \t^*$.  Let $\pi_P\colon P\rightarrow \Sigma$ be a principal $T$-bundle. There is a canonically defined \emph{obstruction class} $\Tr(c_1(P)\wedge \xi) \in H^3(\Sigma,\D)$ \cite[Definition 4.15]{gualtieri2017tropical}, and in \cite[Theorem 4.16]{gualtieri2017tropical} it is shown that if the obstruction class vanishes then the total space of $P$ admits log symplectic forms $\omega_P$ with poles along the divisor $\Z_P=\pi^{-1}(\D)$ satisfying
\[ \iota(X_P)\omega_P=-\pi_P^*\pair{\xi}{X}.\]
The space of equivalence classes of such log symplectic forms up to $T$-equivariant symplectomorphisms inducing the identity on $\Sigma$ is an affine space of the real vector space $H^2(\Sigma,\D)$ \cite[Theorem 4.16]{gualtieri2017tropical}.

If $\Delta \subset \Sigma$ is an admissible convex log affine polytope satisfying the Delzant condition, then one can perform a log symplectic version of symplectic cutting on $\pi^{-1}(\Delta)\subset P$ along the faces of $\Delta$ in order to obtain a smooth closed log symplectic $T$-manifold $(M^{2n},\Z,\omega)$, with induced map $\pi\colon M\rightarrow \Delta \subset \Sigma$ satisfying
\[ \iota(X_M)\omega=-\pi^*\pair{\xi}{X}.\]
We refer the reader to \cite[Theorem 5.18, Corollary 5.21, Theorem 6.3]{gualtieri2017tropical} for details and further results.

The modular weights of $(M,\Z,\omega)$ are minus the residues of $\xi$ at the various hypersurfaces of $\D$. Choosing a basepoint $p_0 \in \Sigma\backslash \cup \D$, each connected component of $\Sigma \backslash \cup \D$ becomes identified with $\t^*$, and hence the map $\pi$ determines a map
\begin{equation} 
\label{e:mmSigma}
\mu \colon M\backslash \cup \Z \rightarrow \t^* 
\end{equation}
which is a momentum map in the sense of Definition \ref{d:Hamiltonian}. It is explained in \cite[Proposition A.5]{gualtieri2017tropical} that the residues $c_1,...,c_k$ associated to a collection $D_1,...,D_k \in \D$ such that $D_1 \cap \cdots \cap D_k \ne \emptyset$ are linearly independent.  In particular it follows from Lemma \ref{l:proper} that $\mu$ is proper.

\subsection{Quantization of toric log symplectic manifolds.}
By the transversality assumption on the strata of $\Delta$, the vertices of $\Delta$, which are the images of the $T$-fixed point submanifolds of $M$, lie in $\Sigma \backslash \cup \D$. We may choose a basepoint in $\Sigma \backslash \cup \D$ to arrange that one of these vertices lies in the weight lattice $\Lambda$ for $T$ once the components of $\Sigma \backslash \cup \D$ are identified with $\t^*$ (for example, choose the basepoint to be one of the vertices of $\Delta$). As a corollary of Remark \ref{r:prequantizable} we have:
\begin{corollary}
A toric log symplectic manifold $(M,\Z,\omega,\mu)$ is prequantizable (in the sense of Definition \ref{d:prequantizable}) if and only if the image of $[\omega]$ in $H^2(M)$ is integral.
\end{corollary}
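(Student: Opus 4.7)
The plan is to reduce the claim to Remark \ref{r:prequantizable}, exploiting (a) the Mazzeo--Melrose identification of the image of $[\omega]$ in $H^2(M)$ with $[\bar\omega]$, and (b) the freedom to shift $\bar\mu$ by an element of $\z^*$ that is built into Definition \ref{d:prequantizable}.

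First I would verify the hypotheses of Remark \ref{r:prequantizable}: $G=T$ is a torus by assumption, $M$ is connected by the toric construction, and $M^T$ is non-empty because the toric construction produces $T$-fixed points that project under $\pi\colon M\to \Delta$ to the vertices of $\Delta$. By the transversality requirement in the definition of an admissible log affine polytope, these vertices lie in $\Sigma\backslash \cup \D$, so $\mu$ (and hence $\bar\mu$) takes well defined values in the relevant component of $\Sigma\backslash \cup\D\simeq \t^*$ at the corresponding points of $M^T$.

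Next I would apply Remark \ref{r:prequantizable}: the presymplectic Hamiltonian $T$-space $(M,\bar\omega,\bar\mu)$ is prequantizable iff $[\bar\omega]\in H^2(M,\bR)$ is integral and $\bar\mu(p)\in \Lambda$ for some $p\in M^T$. By the Mazzeo--Melrose decomposition \eqref{e:MazzeoMelrose}, $[\bar\omega]$ is precisely the image of $[\omega]\in H^2(M,\Z)$ under the projection to the leading summand $H^2(M)$, so integrality of $[\bar\omega]$ is equivalent to integrality of the image of $[\omega]$ in $H^2(M)$.

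Finally, since $G=T$ is abelian we have $\z=\t$ and $\z^*=\t^*$. Definition \ref{d:prequantizable} permits shifting $\bar\mu$ by an arbitrary element of $\z^*$; given any $p\in M^T$, choosing the shift to be $\lambda-\bar\mu(p)$ for any $\lambda\in \Lambda$ automatically places $\bar\mu(p)$ in $\Lambda$. Hence for toric log symplectic manifolds the weight-lattice condition of Remark \ref{r:prequantizable} is vacuous, and prequantizability in the sense of Definition \ref{d:prequantizable} reduces to integrality of the image of $[\omega]$ in $H^2(M)$. There is no serious obstacle: once one observes that $\z^*=\t^*$ absorbs the weight-lattice condition and that Mazzeo--Melrose identifies the two cohomology classes, the corollary is a direct transcription of Remark \ref{r:prequantizable}.
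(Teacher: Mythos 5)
Your proof is correct and takes essentially the same approach as the paper: verify the hypotheses of Remark \ref{r:prequantizable} (noting that $T$-fixed points lie over the vertices of $\Delta$, which sit in $\Sigma\setminus\cup\D$ by the transversality condition), identify $[\bar\omega]$ with the image of $[\omega]$ in $H^2(M)$ via Mazzeo--Melrose, and use the $\z^*=\t^*$ shift freedom to dispose of the weight-lattice condition. The paper phrases that last step as choosing the basepoint in $\Sigma\setminus\cup\D$ to be a vertex of $\Delta$, while you phrase it as applying the $\z^*$-quotient built into Definition \ref{d:prequantizable}; these are the same observation.
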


Assume the image of $[\omega] \in H^2(M)$ is integral, and let $(L,\nabla^L)$ be prequantum data.  If $\Delta$ is oriented and $\D$ admits global defining functions, then the quantization $\index_T(\dirac^L) \in R(T)$ is defined (Definition \ref{d:RRnumber}), and depends only on the isomorphism class of the $T$-equivariant line bundle $L$ as well as the chosen orientation of $\Delta$ (reversing the orientation reverses the overall sign).  Since $\mu$ is (automatically) proper, Corollary \ref{c:shifting} of the $[Q,R]=0$ theorem applies, and yields the following description of the quantization.  Let $\Sigma_1,...,\Sigma_k$ be the connected components of $\Sigma \backslash \cup \D$. For each $1\le j \le k$, let $o_j \in \{\pm 1\}$ be the parity of the number hypersurfaces of $\D$ crossed by a smooth curve connecting the basepoint to a point of $\Sigma_j$.  Under the identification of $\Sigma_j$ with $\t^*$, $\Sigma_j\cap \Delta$ becomes a (possibly non-compact) polyhedron $\Delta_j \subset \t^*$ with finitely many edges.  Let $[\Delta_j \cap \Lambda]$ denote the characteristic function of $\Delta_j \cap \Lambda$.
\begin{corollary}
\label{c:tropicalquant}
The multiplicity of the representation $\bC_\lambda$, $\lambda \in \Lambda$ in $\index_T(\dirac^L)$ is
\[ \sum_{j=1}^k o_j[\Delta_j\cap \Lambda](\lambda).\]
\end{corollary}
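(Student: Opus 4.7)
The plan is to apply the shifting trick (Corollary \ref{c:shifting}) weight by weight. Properness of $\mu$ is automatic in the toric setting: by the linear independence of the residues associated to any intersecting collection of hypersurfaces of $\D$ \cite[Proposition A.5]{gualtieri2017tropical}, the hypothesis of Lemma \ref{l:proper} is satisfied. Since $T$ is abelian, $T_\lambda=T$ for every $\lambda\in\Lambda$, so for each such $\lambda$ at which $T$ acts freely on $\mu^{-1}(\lambda)$, Corollary \ref{c:shifting} identifies the multiplicity of $\bC_\lambda$ in $\index_T(\dirac^L)$ with the Riemann--Roch number $\index(\dirac^{L_\lambda}_{M_\lambda})$ of the reduced space.

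Next I would unpack what the reduced spaces are. From the toric description the momentum map $\mu$ is obtained from $\pi\colon M\to \Delta$ by identifying each component $\Sigma_j$ with $\t^*$ via principal value integration of $\xi$ from the basepoint. The Delzant condition then implies that for each $j$ such that $\lambda$ lies in the interior of $\Delta_j$, the fibre of $\pi$ over the corresponding point of $\Sigma_j$ is a single principal $T$-orbit. Hence $M_\lambda$ is a finite disjoint union of points indexed by the $j$ with $\lambda\in \Delta_j^\circ$, and each point contributes $\pm 1$ to the index, with sign determined by its induced orientation.

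The heart of the argument is then to identify that sign with $o_j$. The chosen orientation on $\Delta$ fixes a global orientation on $M$, whereas on each component of $M\backslash \cup\Z$ lying over $\Sigma_j$, the log symplectic form $\omega$ determines its own orientation via $\omega^n/n!$. These two orientations agree on the distinguished component containing (a lift of) the basepoint and flip each time one crosses a hypersurface $D\in\D$: locally near $D$ the form $\omega$ contains a factor $\d h/h$ with $h$ a defining function of $D$, and the sign of this factor (relative to a globally defined volume form on $M$) changes across $\{h=0\}$ precisely because the residue $c_D$ does not vanish. Hence the global/symplectic sign mismatch over $\Sigma_j$ is exactly $o_j$, which is inherited by the induced orientation on the corresponding reduced point. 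Summing over $j$ yields the stated formula at regular lattice points; boundary lattice points $\lambda\in\partial\Delta_j$, where $T$ fails to act freely, are then handled by a shift desingularization to a nearby regular value as in \cite{MeinrenkenSjamaar} (compare the singular-case discussion preceding this section), which leaves the formula unchanged because each $\Sigma_j$ containing $\lambda$ still contributes a single (possibly orbifold) reduced point with induced orientation $o_j$. The principal obstacle I expect is precisely this orientation bookkeeping across the hypersurfaces of $\D$; once it is in place the rest is essentially combinatorial input from the Delzant structure.
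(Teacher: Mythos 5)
Your proposal correctly reconstructs the argument the paper leaves implicit after its appeal to Corollary \ref{c:shifting}: the reduced space $M_\lambda$ over a regular lattice point $\lambda$ is a finite set of oriented points indexed by the polyhedra $\Delta_j$ containing $\lambda$, each contributing $o_j$ to the index, with boundary lattice points handled by a shift to a nearby regular value. One small correction to your orientation bookkeeping: the Liouville volume $\omega^n/n!$ flips sign across a hypersurface $Z\in\Z$ because $\omega$ is non-degenerate as a log form (equivalently $\tn{res}_Z(\omega)$ is a non-vanishing log $1$-form on $Z$), not because the modular weight $c_Z$ is non-zero; these are different conditions, but since in the toric setting all the residues of $\xi$ are non-zero anyway, your conclusion is unaffected.
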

\noindent It is a consequence of Corollary \ref{c:shifting} that the RHS vanishes for all but finitely many $\lambda \in \Lambda$. Corollary \ref{c:tropicalquant} is a generalization of the following well-known fact from the compact toric symplectic case: the quantizing Hilbert space is a finite dimensional representation of $T$, and the set of weights that occurs is $\Delta \cap \Lambda$, each with multiplicity $1$.

\begin{remark}
The map $\pi \colon M \rightarrow \Delta \subset \Sigma$ allows us to define a more refined `quantization', as follows.  The Dirac operator determines a class $[\dirac^L]$ in the K-homology group $K_0^T(M)$, and we may take its push-forward $\pi_\ast[\dirac^L]\in K_0^T(\Delta)\simeq K_0(\Delta)\otimes R(T)$. Since $\Delta$ may have non-trivial topology, this K-homology class may contain additional information. Pushing forward further under the map $\Delta \rightarrow \pt$ recovers $\index_T(\dirac^L) \in R(T)$.
\end{remark}

\bibliographystyle{amsplain}

\begin{thebibliography}{10}

\bibitem{AtiyahTransEll}
M.F. {Atiyah}, \emph{Elliptic operators and compact groups}, Lecture Notes in Mathematics Vol. 401,
  Springer-Verlag, 1974.

\bibitem{BaumHigsonSchick}
P.~{Baum}, N.~{Higson}, and T.~{Schick}, \emph{On the equivalence of geometric
  and analytic {K}-homology}, Pure and Applied Math. Quart. \textbf{3} (2007),
  1--24.

\bibitem{BLSbsympl}
M.~Braverman, Y.~Loizides, and Y.~Song, \emph{Geometric quantization of
  b-symplectic manifolds}, J. Sympl. Geom. \textbf{19}, (2021), no.~1, 1--36.

\bibitem{HamiltonianCobordismBook}
V.~{Ginzburg}, V.~{Guillemin}, and Y.~{Karshon}, \emph{Moment maps,
  {C}obordisms, and {H}amiltonian {G}roup {A}ctions}, AMS, 2002.

\bibitem{gualtieri2014symplectic}
M.~{Gualtieri} and S.~{Li}, \emph{Symplectic groupoids of log symplectic
  manifolds}, Int. Math. Res. Not. \textbf{2014} (2014), no.~11, 3022--3074.

\bibitem{gualtieri2017tropical}
M.~Gualtieri, S.~Li, A.~Pelayo, and T.~Ratiu, \emph{The tropical momentum map:
  a classification of toric log symplectic manifolds}, Mathematische Annalen
  \textbf{367} (2017), no.~3-4, 1217--1258.
  
\bibitem{guillemin2000unfolding}
V.~Guillemin, A.C.~da~Silva, and C.~Woodward, \emph{On the unfolding of folded symplectic structures}, Math. Res. Let. \textbf{7} (2000), no.~1, 35--53.

\bibitem{guillemin2014symplectic}
V.~{Guillemin}, E.~{Miranda}, and A.~R. {Pires}, \emph{Symplectic and {P}oisson
  geometry on b-manifolds}, Adv. Math. \textbf{264} (2014), 864--896.

\bibitem{guillemin2014convexity}
V.~{Guillemin}, E.~{Miranda}, A.~R. {Pires}, and G.~{Scott}, \emph{Convexity
  for {H}amiltonian torus actions on $ b $-symplectic manifolds}, Math. Res.
  Letters \textbf{24} (2017), 363--377.
  
\bibitem{guillemin2014toric}
\bysame, \emph{Toric
  actions on b-symplectic manifolds}, Int. Math. Res. Not. \textbf{2015}
  (2014), no.~14, 5818--5848.

\bibitem{GuilleminSternbergConjecture}
V.~{Guillemin} and S.~{Sternberg}, \emph{Geometric quantization and
  multiplicities of group representations}, Invent. Math. \textbf{67} (1982),
  no.~3, 515--538.

\bibitem{SymplecticTechniques}
\bysame, \emph{Symplectic techniques in physics},
  Cambridge University Press, 1990.

\bibitem{guillemin2018geometric}
V.~W {Guillemin}, E.~{Miranda}, and J.~{Weitsman}, \emph{On geometric
  quantization of b-symplectic manifolds}, Adv. Math. \textbf{331} (2018),
  941--951.

\bibitem{Kirwan}
F.~{Kirwan}, \emph{Cohomology of quotients in symplectic and algebraic
  geometry}, Princeton Univ. Press, 1984.
  
\bibitem{klaasesymplie2020}
R.L.~Klaase, \emph{Obstructions for symplectic Lie algebroids}, SIGMA \textbf{16} (2020), no.~121.

\bibitem{LLSS1}
Y.~Lin, Y.~Loizides, R.~Sjamaar, and Y.~Song, \emph{Symplectic reduction and
  {D}arboux-{M}oser-{W}einstein theorems for symplectic {L}ie algebroids}, submitted.

\bibitem{MeinrenkenSymplecticSurgery}
E.~{Meinrenken}, \emph{Symplectic surgery and the {S}pin-c {D}irac operator},
  Adv. Math. \textbf{134} (1998), 240--277.

\bibitem{MeinrenkenEncyclopedia}
\bysame, \emph{Equivariant cohomology and the {C}artan model}, Encyclopedia of
  mathematical physics, Elsevier, 2006, pp.~242--250.

\bibitem{MeinrenkenSjamaar}
E.~{Meinrenken} and R.~{Sjamaar}, \emph{Singular reduction and quantization},
  Topology \textbf{38} (1999), no.~4, 699--762.

\bibitem{melrose1993atiyah}
R.~Melrose, \emph{The {A}tiyah-{P}atodi-{S}inger index theorem}, AK Peters/CRC
  Press, 1993.

\bibitem{NestTsygan96}
R.~{Nest} and B.~{Tsygan}, \emph{Formal deformations of symplectic manifolds
  with boundary}, J. Reine Angew. Math. \textbf{481} (1996), 27--54.

\bibitem{ParadanRiemannRoch}
P-E. {Paradan}, \emph{Localization of the {R}iemann-{R}och character}, J. Fun.
  Anal. \textbf{187} (2001), 442--509.

\bibitem{ParadanVergneSpinc}
P-E. {Paradan} and M.~{Vergne}, \emph{Equivariant {D}irac operators and
  differentiable geometric invariant theory}, Acta. Math. \textbf{218} (2017),
  137--199.

\bibitem{WittenNonAbelian}
\bysame, \emph{Witten non abelian localization for
  equivariant {K}-theory, and the {$[Q, R]= 0$} theorem}, Memoirs of the Am.
  Math. Soc., 2019.

\bibitem{SjamaarLerman}
R.~Sjamaar and E.~Lerman, \emph{Stratified symplectic spaces and reduction},
  Ann. of Math. \textbf{134} (1991), no.~2, 375--422.

\end{thebibliography}
\providecommand{\bysame}{\leavevmode\hbox to3em{\hrulefill}\thinspace}
\providecommand{\MR}{\relax\ifhmode\unskip\space\fi MR }
\providecommand{\MRhref}[2]{%
  \href{http://www.ams.org/mathscinet-getitem?mr=#1}{#2}
}
\providecommand{\href}[2]{#2}

\end{document}